\newtheorem{theorem}{Theorem}
\newtheorem{definition}[theorem]{Definition}
\newtheorem{lemma}[theorem]{Lemma}
\newtheorem{proposition}[theorem]{Proposition}
\newtheorem{remark}[theorem]{Remark}
\theoremstyle{nonumberplain}
\newtheorem{proof}[theorem]{Proof}
\numberwithin{equation}{section}
\begin{document}

\date{}
\title{\textbf{Global solutions for stochastically controlled fluid dynamics models }}
\author{Dan Crisan \qquad Oana Lang \\
\\ {\small Department of Mathematics, Imperial College London, SW7 2AZ, UK}}
\maketitle
\begin{abstract}
    For a class of evolution equations that possibly have
    only local solutions, we introduce a stochastic component that ensures that the solutions of the corresponding stochastically perturbed equations are global. The class of partial differential equations amenable for this type of treatment includes the 3D Navier-Stokes equation, the rotating shallow water equation (viscous and inviscid), 3D Euler equation (in vorticity form), 2D Burgers' equation and many other fluid dynamics models.         
\end{abstract}
\section{Introduction} 

A large class of partial differential equations (PDEs) exists in the literature for which global existence of solutions does not necessarily hold, with several such examples coming from geophysical fluid dynamics, see e.g. \cite{Bresch1}, \cite{LCRo}, \cite{LinkNguyen}, \cite{LuoZhang}. In this paper we show that the addition of a suitable stochastic term to this type of PDEs ensures existence of a global solution. The stochastic term can be chosen to act on the solution selectively, for example, it can act only when the solution of the PDE becomes too large in a suitably chosen norm. 
The class of PDEs that can be considered in this sense has the form
\begin{equation}\label{PDE}
    dX_t = A_t(X_t)dt
\end{equation}
where $A$ is a (possibly) time dependent nonlinear operator. A representative example of such a PDE is the 3D Navier Stokes equation, either written for the fluid velocity $u$ 
\begin{equation}\label{nseqn}
du+ u\cdot \nabla udt =\Delta udt
\end{equation}
or for the fluid vorticity $\omega$ 
\begin{equation}
d\omega+ u\cdot \nabla \omega dt - \omega\cdot \nabla u dt =\Delta \omega dt. 
\end{equation}
The associated SPDE will have the form 
\begin{equation}\label{SPDE}
    dX_t = A_t(X_t)dt+B_t(X_t)dW_t
\end{equation}
where the stochastic term will be constructed such that it reduces the norm of the system below a suitably chosen value. For the 3D Navier Stokes equation \eqref{nseqn} for instance, a possible choice for the diffusion term is $B(u)=\theta\|u\|^\alpha u$ where $\|u\|$ is a certain Sobolev norm of $u$ and $\alpha,\theta$ are constants that match the nonlinearity of $A$.

The stochastic term stops the blow-up through a "second order" effect. To be more precise the increase of the norm of the solution of the partial differential equation is counteracted by a term 
driven by the quadratic variation of the stochastic term. As it is well-known, the stochastic term does not have finite variation paths. When the chain rule is applied for a given function $\varphi$ 
of the norm of the solution, an additional term appears as a result of the infinite variation of the stochastic term. This term involves the second derivative of $\varphi$. So whilst the nonlinear term appearing in the PDE is multiplied by the first derivative, the quadratic variation term is multiplied by the second derivative. To cancel the two, one needs a function that changes signs when differentiated.
In the literature on blow-up of (one-dimensional) SDEs the function that has this property is called the \textit{scale function}: more precisely, for a one-dimensional SDEs the scale function $\varphi$ is chosen so that the drift term of the process given by the function of the solution, vanishes (see e.g. \cite{KaratzasShreve}). That means that $\varphi$ will satisfy a second order partial differential equation. Once the drift is eliminated, the remaining stochastic term behaves like a Brownian motion with a time clock depending on its quadratic variation. Explicit conditions can then be given to show when the blow-up will occur.

For an infinite-dimensional SDE (in other words, an SPDE), we cannot apply directly this procedure. The reason for this is that, generally the drift term depends not only on the norm that we are seeking to prevent the blow-up, but also on an additional norm of the solution. Depending on the case that we are in, this can be stronger or weaker than the existence norm. We seek a scale function for the norm of the solution of the SPDE. This scale function, denoted by $\varphi$, can no longer be obtained as a solution of a PDE. Instead, we look for a scale function that makes the drift uniformly bounded from above. In the classical case this will mean that the solution of the one dimensional equation might still blow-up to $-\infty$. Here the quantity that we want to control is a norm which, by definition remains nonnegative, hence there is no risk of negative blow-up. 

Once we remove the constraint of having a null drift, we end up with a large class of stochastic noises that can be used. This is still not enough to stop the blow-up of the norm. Whilst the drift term will have an upper bound of the form $mt$ (where $m$ is the norm itself and $t$ is time), the stochastic term (denoted by) $\tilde M$ is a local martingale that might still blow up in finite time. As a result we seek a scale function that will not only bound the drift by $mt$, but it will make it less than $mt-\epsilon \langle\tilde M\rangle_t$, where $\langle\tilde M\rangle$ is the quadratic variation of the local martingale $\tilde M$ and $\epsilon$ is a positive constant. As a result, the scale function of the norm will now be bounded \emph{pathwise} by $mt + \tilde{M}_t-\epsilon \langle\tilde M\rangle_t$, as the term $t\rightarrow \tilde{M}_t-\epsilon \langle\tilde M\rangle_t$ is bounded on the whole interval $[0,\infty )$ by the supremum of a Brownian motion plus a negative constant term, which can be controlled $\mathbb{P}$ - almost surely, see Proposition \ref{revuzyor}.

Whilst not used directly in our analysis, we should also mention that the stochastic term in equation (\ref{SPDE}) is "infinitesimally unbiased" in the sense that it is a local martingale. It is not a proper martingale as we do not know if it is integrable. This is a major hurdle in the analysis of the solution of the SPDE: all terms can only be controlled in probability \emph{and} not in expectation. We cannot guarantee that the norm process has finite second moment or even that it has finite mean. This applies to all steps in the construction of the global solution of the equation (\ref{SPDE}): the a priori bounds, the tightness results and the convergence of the solutions of the approximating equation.                

The results presented below can be used to introduce a (stochastic) control for a dynamical system modelled by the solution of the PDE. We describe just the intuitive idea, and refer to e.g. \cite{Bensoussan} for more details. The application of the control is done only when the norm of the solution of the PDE exceeds a certain level. This is possible as the scale function is chosen not only to make the drift term less than $mt-\epsilon \langle \tilde M \rangle_t$, but in fact less than $-\epsilon \langle \tilde M \rangle_t$ for values of the norm larger that a suitable chosen constant $K$. This permits the following control strategy: 
We denote by $\tau_0$ the first (deterministic) time when the norm of the solution of the PDE reaches level  $\varphi^{-1}(2K)$, where $K$ is the constant defined above.  If $\tau_0=\infty$, then no control is needed. However if $\tau_0<\infty$, we apply the stochastic control to the PDE. We keep the control until the norm of the solution, goes down to level $\varphi^{-1}(K)$. We denote this (stopping) time by $\rho_0$. Once the norm reaches level $K$ we remove the stochastic control and let the equation evolve deterministically until again it reaches level $\varphi^{-1}(2K)$ (denote the new stopping time by $\tau_1$) at which point we apply the stochastic control again until the solution reaches level $\varphi^{-1}(K)$ (denote the new stopping time by $\rho_1$) and repeat the procedure. The resulting process is a (strong) Markov process built by piecing together deterministic trajectories and stochastically controlled ones. The general definition of the (stopping) times is as follows  
\begin{equation}
\tau_i=\inf \{t\ge \rho_{i-1} | \ \ ||X_t||\ge \varphi^{-1}(2K) \}, \ \ \ 
\rho_i=\inf \{t\ge \tau_{i} | \ \ ||X_t||\le \varphi^{-1}(K) \}
\end{equation}                 
By convention, we choose $\rho_{-1}:=0$. As explained above, the system runs according to the original PDE (\ref{PDE}) in the intervals $[\rho_{i-1}, \tau_i]$ and according to the SPDE (\ref{SPDE}) on the 
intervals $[\tau_i, \rho_{i}]$. Note that in all case below, it takes time for the norm of the solution of the PDE to reach level $\varphi^{-1}(2K)$ starting from level $\varphi^{-1}(K)$. More precisely, one can show that there exists a positive constant $\alpha$ which depends only on $K$ such that $\tau_i - \rho_{i-1} >\alpha$. This ensures that $\lim_{i\rightarrow \infty}\tau_i=\lim_{i\rightarrow \infty}\rho_i=\infty$. This ensures that the process $X$ is defined on the whole time line $[0,\infty]$.              

Also note that, in the first two cases, once the stochastic control is introduced at time $\tau_i$, that norm of the process will reach level $\varphi^{-1}(K)$ in finite time. In other words, $\rho_i-\tau_i<\infty$, $P-a.s.$. As stated above, observe that on the interval $[\tau_i,\rho_i]$ the upper bound for the process $t\rightarrow \varphi (\|X_t\|)$ is the process $t\rightarrow \varphi(2K)+\tilde M_t-\epsilon [\tilde M]_t $. This process is, in turn, a time-changed Brownian motion with negative drift which will eventually hit level $\varphi(K)$ as the time change tends to infinity    

In the third case, it may be possible for $\rho^i$ to be $\infty$. The fix in this case is to choose a higher level $K$ which will ensure that the drift term of the scale function applied to the norm $t\rightarrow \varphi (\|X_t\|)$ has an upper bound  of the form $t\rightarrow \varphi(2K)-\delta(t-\tau_i) + \tilde M_t-\epsilon [\tilde M]_t$ which will in turn ensures $t\rightarrow \varphi (\|X_t\|)$ eventually reaches level $\varphi(K)$.
      
We remark that the blow-up control obtained through the stochastic term does not act in a way similar to a forcing term. More precisely, it is the quadratic variation of the stochastic term that controls the 
blow-up. In some sense this can be viewed as a second order effect as the stochastic term does not directly influence the blow-up. The system becomes infinitesimally unbiased due to the quadratic variation.
To understand this, let us look at the following linear one-dimensional ode (we write it in differential term as we will add a stochastic term to it:
\[df_t=af_t dt, \ \ \ \ f_0,a>0. \]
The solution of this ODE does not blow-up, but it does increase exponentially fast in time: $f_t=f_0\exp(at)$. Let us contrast this with the solution of the same equation  but now stochastically perturbed: 
\[df_t=af_t dt+bf_t dW_t, \ \ \ \ f_0,a>0, \]
where we get a one dimensional geometric Brownian motion, with the explicit solution 
$$f_t=f_0\exp\left(at-{b^2\over 2}[W]_t+bW_t\right)=\exp\left((a-{b^2\over 2})t+bW_t\right),$$  
where $\langle W \rangle=t$ is the quadratic variation of the Brownian motion $W$. We deduce immediately that, as soon as the diffusion coefficient is sufficiently high (in other words as soon as we inject sufficient noise into the system), more precisely, if we choose $b^2>2a$ then the solution converges to $0$ asymptotically. The effect of the stochasticity is severe: from exponential increase we end up with exponential decay to $0$. However we should note that the expected value of the geometric Brownian motion still increases exponentially (in fact it  satisfies the deterministic PDE - the perturbation is unbiased) even though, with probability 1,  every path decays to 0.    

We should further remark that the control we need is more severe than the exponential increase described above. More precisely we need to control a (possible) blow-up in finite time. As a result we cannot expect to be able to use a linear/multiplicative noise, we will need instead a super-linear noise. This will have several secondary effects:

\begin{itemize}
\item The resulting stochastic term will \textbf{not} be a martingale but just a local martingale. We cannot even prove that it has finite expectation. As a result, all uniform controls, tightness results and convergence results will have to be adapted to cope with this difficulty. The standard tightness criteria need to be modified to account for these difficulties. 

\item The stochastic term will be nonlinear and so we will not be able to say that the perturbation is unbiased. 

\item The control is not unique: as long as the superlinearity is sufficiently high, we obtained the desired result.

\end{itemize}

Several attempts have been made to show improved properties of SPDEs driven by transport noise, compared to their deterministic counterparts (see e.g. \cite{Gerencser}, \cite{Catellier}, \cite{Flandoli}, \cite{Theresa}). However, due to its intrinsic structure, transport noise seems to leave the turbulent behaviour of the fluid untouched. It becomes therefore interesting to look at different noise structures which could better explain the uncertainty in a turbulent dynamical system. We make a step forward by proposing a noise structure which can at least improve the analytical properties of the system. The physical implications of this type of noise are perhaps less clear. 
Our analysis requires a fine balance between the conditions imposed on the nonlinear operator, the initial conditions and the choice of the stochastic term. Only this way we succeed to show a priori bounds on the solution of the SPDE, the tightness of the approximating sequence and its convergence. The results we obtain can cover a wide class of models from stochastic fluid dynamics: compressible \textbf{and} incompressible, viscous \textbf{and} inviscid.

Formally, we work with four separable Hilbert spaces $\mathscr{D}, \mathscr{F}_1, \mathscr{F}_0, \mathscr{G}$ which are compactly embbeded in each other 
\begin{equation}%\label{embeddings}
    \mathscr{D} \hookrightarrow \mathscr{F}_1 \hookrightarrow \mathscr{F}_0 \hookrightarrow  \mathscr{G}.  
\end{equation}
Typically, the main space of existence of the corresponding solution is $\mathscr{F}_0$, endowed with the uniform norm, when a higher order integral of the $\mathscr{F}_1$ norm also exists (this setting is characteristic to models which contain a viscous term). The space $\mathscr{D}$ is a higher order space in which we can control the uniform norm of an approximating sequence of solutions, while the space $\mathscr{G}$ is the space in which the approximating sequence of solutions converges to the solution of the original equation. 
We identify three distinct cases, depending formally on the space where the initial condition lives and the form of the noise term, see \eqref{caseI}-\eqref{caseIII} in Section \ref{sect:setting}. However, this structure is actually dictated by the form of the nonlinear drift and the required regularity of the solution. The second case (see \eqref{caseII}) corresponds to a stochastic closure of a compressible inviscid model and it is, from this perspective, the most difficult. This is natural, as this is the case where the noise truly improves the properties of the solution - the associated deterministic system cannot be closed without adding artificial viscosity (or a different type of transformation) and to the best of our knowledge, no global existence result exists for such a system. So by adding the right type of stochasticity, we overcome this long-standing issue in the theory of compressible inviscid fluid dynamics models.

It is, a priori, not clear how "physical" is the noise we propose. We know that there is usually a delicate balance between the energy which is created within a multi-scale nonlinear system and the energy which is dissipated through the diffusion term. The blow-up occurs when this balance is broken, and our noise term is designated in a special way to provide an exact balance between absorbed and dissipated energy. That is, the stochastic term provides "the right amount of energy" for the solution to be stabilised. 
Note that the model we present is nonlocal, the noise depending on the whole domain. 

The interest in proving existence of global strong solutions has been continuously growing over the last years. In some sense, our result is closest in spirit to \cite{WuHu} where the authors prove existence of global solutions in a finite-dimensional setting: we extend this result to the infinite-dimensional case, and adapt it to a much more abstract functional setting. In \cite{Maurelli} it has been proven that an explicit Stratonovich-type noise can prevent blow-up: this holds on $\mathbb{R}^d$ with $d \geq 2$, and under specific super-linear growth assumptions on the drift. In \cite{MaurelliNEW} the theory from \cite{Maurelli} is extended such that the existence of a global-in-time strong solution for stochastic Euler-type equations in 2D and 3D can be shown. In \cite{Hao1}
the authors prove that for Camassa-Holm-type equations, a strong enough noise can prevent the blow-up with probability 1, with a specific dependence on the initial conditions. In \cite{Hao2} it has been shown that uniqueness and almost sure existence of solutions can be proven for a one dimensional transport equation under random perturbations on the real line.

Alternative approaches for showing existence of a global solution involve so-called regularisation/$\alpha$-regularisation methods: see e.g. \cite{OlsonTiti1}, \cite{BarbatoMorandinRomito}, \cite{Galeati}. However, we are not going in this direction, as our aim is to find a noise term which makes the solution global, not to prove scaling limits or to prove existence of a global solution for a deterministic equation. 

Finally let us explain the connection with the authors' previous work:  In \cite{LangCrisanMemin} we have shown existence of a global \emph{weak} (in PDE sense) solution, while the existence of a \emph{strong} solution could only be proven locally. In \cite{CrisanLangSALTSRSW}, on the other hand, we have proven that a global strong solution can exist with positive probability. That is, if $\mathcal{T}$ is the corresponding maximal time of existence, we have $\mathbb{P}(\mathcal{T} = \infty) > 0$. However, in this paper we show that in such a case we have $\mathbb{P}(\mathcal{T} = \infty) = 1$.

\section{Setting and assumptions}\label{sect:setting}
Let $(\Omega, \mathcal{F}, (\mathcal{F}_t)_t, \mathbb{P})$ be a fixed stochastic basis. Let $\mathscr{D}, \mathscr{F}_1, \mathscr{F}_0, \mathscr{G}, \mathscr{U}$ be  separable Hilbert spaces such that 
\begin{equation}\label{embeddings}
    \mathscr{D} \hookrightarrow \mathscr{F}_1 \hookrightarrow \mathscr{F}_0 \hookrightarrow  \mathscr{G}  
\end{equation}
are compact embeddings with
\begin{equation}\label{condscproduct}
{}_{\mathscr{D}}\langle a, b\rangle_{\mathscr{G}} = \langle a, b \rangle_{\mathscr{F}_i} \quad \hbox{for all} \quad a\in\mathscr{D}, b\in \mathscr{F}_i.
\end{equation}
and there is $C>0$ and $m\in(0,1)$ such that
\begin{equation*}
\|f\|_{\mathscr{F}_0} \leq C\|f\|_{\mathscr{F}_1}^{m}\|f\|_{\mathscr{G}}^{1-m} \quad \quad \hbox {for all} \quad f\in\mathscr{F}_1
\end{equation*}
We introduce the following operators
\begin{equation}
    A:[0,T] \times \mathscr{F}_{i} \times \Omega \rightarrow \mathscr{G}
\end{equation}
\begin{equation}
    B: [0,T] \times \mathscr{F}_{i} \times \Omega \rightarrow L_2(\mathscr{U},\mathscr{F}_i)
\end{equation}
which are progressively measurable with respect to $\mathcal{B}([0,t])\times\mathcal{B}(V)\times\mathcal{F}_t$ for any $t\in[0,T]$ and $T\in[0,\infty)$ fixed. Above we have either $i=0$ or $i=1$, depending on the structure of the drift, see below for more details.

Consider the following SPDE on $\mathscr{G}$\footnote{By $A_t(X_t)$ and $B_t(X_t)$ we mean the map $\omega \rightarrow A(t, X_t(x,\omega))$ and $\omega \rightarrow B(t, X_t(x,\omega))$}
\begin{equation}\label{eq:maingeneral}
    dX_t = A_t(X_t)dt + B_t(X_t)dW_t 
\end{equation}
where $W$ is a $\mathscr{U}$-valued Wiener process and $X_0$ is an  $\mathscr{F}_i$-valued and $\mathcal{F}_0$-measurable random variable. We assume that this equation admits a unique maximal strong solution $(X, \mathcal{T})$ in $\mathscr{F}_i$ in the sense described in Section \ref{sect:mainresults}. 
We define
\begin{equation}\label{noiseterm}
B_t(X_t):= \theta \displaystyle\|X_t\|_{\mathscr{F}_{i}}^{\alpha_{i}}X_t, 
\end{equation}
where $\alpha_{i}, \theta \in \mathbb{R}_{+}$, and $i$ will be either 0 or 1 depending the conditions imposed on the operator $A$.
Then equation \eqref{eq:maingeneral} 
takes the following form:

\begin{equation}\label{eqnew:maingeneralnew}
    dX_t = A_t(X_t)dt + \theta \displaystyle\|X_t\|_{\mathscr{F}_{i}}^{\alpha_i}X_tdW_t. 
\end{equation}
Let $(\rho_i)_i$ be an orthogonal basis in $\mathscr{D}$ and denote by $\mathscr{D}^d$ the span generated by $\rho_1, \rho_2, \ldots, \rho_d$. So
\begin{equation}
\mathscr{D}^d = span\{\rho_1, \rho_2, \ldots, \rho_d\}.
\end{equation}
We denote by $T_d$ the projection operator $T_d : \mathscr{G} \rightarrow \mathscr{D}^d$.\footnote{So $T_d\mathscr{G} = \mathscr{D}^d$.} Then denote by $A^d$ and $B^d$ respectively the projection of the operators $A$ and $B$. That is
\begin{equation}
\begin{aligned}
& A^d(X^d) := T_d(A(X^d)) \\
& B^d(X^d) := T_d(B(X^d))
\end{aligned}
\end{equation}
with $A^d, B^d:\mathscr{D}^d \rightarrow \mathscr{D}^d$.
We assume that the embedding $\mathscr{D} \hookrightarrow \mathscr{F}$ is compact, continuous, and dense, the embedding $\mathscr{F} \hookrightarrow \mathscr{G}$ is continuous and dense, and $span\{(\rho_i)_i\}$ is dense in $\mathscr{G}$.
So $\mathscr{D}$ is a dense subspace of $\mathscr{G}$. A global solution for equation \eqref{eq:maingeneral} will be constructed using a $d$-dimensional projected equation:

\begin{equation}\label{eqnew:approxgeneralnew}
    dX_t^d = A_t^d(X_t^d)dt + \theta \displaystyle\|X_t^d\|_{\mathscr{F}_{i}}^{\alpha_{i}}X_t^d dW_t^d \quad \hbox{on} \quad [0,T]
\end{equation}
where $X_t^d$ is the classical $d$-dimensional (Galerkin) projection of $X_t$ \footnote{For further standard details related to such a projection see e.g. [Robinson] Section 4.1}. Equation \eqref{eqnew:approxgeneralnew} admits a global solution: this is a direct consequence of Theorem 2.4. and Theorem 2.5. in \cite{WuHu}. An alternative argument is based on the fact since equation  (\ref{eqnew:approxgeneralnew}) has locally Lipschitz coefficients, it has a unique 
maximal solution. In other words, (\ref{eqnew:approxgeneralnew}) has a unique solution $X^d$ defined up to a (maximal) stopping time $\tau_{max}$. In this case $\tau_{max}=\infty$. Should $\tau_{max}<\infty$ it would follow that the solution blows-up, but this is not possible,
as under the conditions imposed below, the corresponding a priori estimates are $d$-independent (see e.g. Section \ref{sect:unifcontrol}), and then
\begin{equation*}
\displaystyle\lim_{\ell\rightarrow\infty} \mathbb{P}\left( \tau_{\ell} < t\right) = 0
\end{equation*}
for any local solution which exists up to the corresponding stopping time $\tau_{\ell}$ and such that $\displaystyle\lim_{\ell \rightarrow \infty}\tau_{\ell} = \tau_{max}$.
This is also the basis of the arguments presented in \cite{WuHu}.
\vspace{4mm}
\begin{definition}{Definition of solutions}
Let $\mathcal{S} = \left(\Omega, \mathcal{F}, (\mathcal{F}_t)_t, \mathbb{P}, (W_t)_t\right)$ be a fixed stochastic basis. 
\begin{itemize}
\item[a.] A \underline{pathwise local analytically strong solution} to \eqref{eq:maingeneral} is given
by a pair $(X,\tau )$ where $\tau :\Omega \rightarrow \lbrack 0,\infty ]$ is
a strictly positive bounded stopping time, $X=(X_t)_t$ is an $\mathscr{F}_i$-valued process 
such that 
$X_{\cdot\wedge \tau }$ is $\mathcal{F}%
_{t}$-adapted for any $t\geq 0$
and \eqref{eq:maingeneral} is satisfied locally i.e.%
\begin{equation*}
X_{t\wedge \tau }=X_{0}+\int_{0}^{_{t\wedge \tau }}A_s(X_{s})
ds+\int_{0}^{_{t\wedge \tau }}B_s(X_{s}) dW_{s}
\end{equation*}%
holds $\mathbb{P}$-almost surely as an identity in $\mathcal{G}$.

\item[b.] If $\tau =\infty $ then the solution is called 
\underline{global}.

\item[c.] A pathwise \underline{maximal solution} to \eqref{eq:maingeneral} is given by a pair $(X, \mathcal{T})$ where $\mathcal{T}: \Omega \rightarrow [0, \infty]$ is a non-negative stopping time and $X=(X_t)_t$ is an $\mathscr{F}_i$-valued process 
for which there exists an increasing sequence of stopping times $(\tau^n)_n$ with the following properties: 

\begin{itemize}
\item[i.] $\mathcal{T}=\lim_{n\rightarrow \infty }\tau ^{n}$ and $\mathbb{P}(%
\mathcal{T}>0)=1$
\item[ii.] $(X,\tau ^{n})$ is a pathwise local solution to \eqref{eq:maingeneral}
for every $n\in \mathbb{N}$
\item[iii.] if $\mathcal{T}<\infty $ then 
\begin{equation*}
\displaystyle\limsup_{t \rightarrow \mathcal{T}} \| X_{t}\|_{\mathscr{F}_0}=\infty .
\end{equation*}
\end{itemize}
%\OL{Maybe explain below why taking the $\mathscr{F}_0$ norm is enough.}
\item[d.] A \underline{global analytically weak solution} to \eqref{eq:maingeneral} is given by an $(\mathcal{F}_t)_t$-adapted process $X=(X_t)_t$
%$X:\Omega \times [0,\infty) \times \mathbb{T}^2 \rightarrow \mathbb{R}^3$ 
such that the following identity holds for any test function
$\varphi \in \mathscr{D}$: 
%$\varphi \in \mathscr{D}=C^{\infty}(\mathbb{T}^2)$: 
\begin{equation*}
  X_t(\varphi)=  \langle X_t, \varphi\rangle = \langle X_0,\varphi\rangle + \displaystyle\int_0^t {}_{\mathscr{G}}\langle A_s(X_s),\varphi\rangle_{\mathscr{D}} ds + \displaystyle\int_0^t {}_{L_2(\mathscr{U},\mathscr{F}_i)}\langle B_s(X_s),\varphi \rangle_{\mathscr{D}} dW_s.
\end{equation*}
\item[e.] The process $X=(X_t)_t$ is a \underline{martingale solution} to \eqref{eq:maingeneral} if there exists a filtered probability space $\left(\tilde{\Omega}, \tilde{\mathcal{F}}, (\tilde{\mathcal{F}}_t)_t, \tilde{\mathbb{P}}\right)$ with a standard Wiener process $\tilde{W}=(\tilde{W}_t)_t$ and an initial condition $\tilde{X}_0$ defined on it, such that $\tilde{X}_0$ and $X_0$ have the same distribution and $X_t$ is an $(\tilde{F}_t)_t$-adapted solution on $\tilde{\Omega}$, to the following equation
\begin{equation}
 dX_t = A_t(X_t)dt + B_t(X_t)d\tilde{W}_t
\end{equation}
with initial condition $\tilde{X}_0$. 
\end{itemize}
\end{definition}

\begin{remark}
Note that an analytically weak solution $X$ becomes analytically strong provided
\begin{equation}
\begin{aligned}
& \mathbb{P}\left( \displaystyle\int_0^T \|A_s(X_s)\|_{\mathscr{F}_i}ds<\infty \right) = 1 \\
& \mathbb{P}\left( \displaystyle\int_0^T \|B_s(X_s)\|_{L_2(\mathscr{U},{\mathscr{F}_i)}}^2ds<\infty \right) = 1
\end{aligned}
\end{equation}
\end{remark}

Our main result says, informally, that\\

\textit{Under the above assumptions and given the embeddings \eqref{embeddings}, equation \eqref{eqnew:maingeneralnew} admits a global strong solution in $\mathscr{F}_0$, $\mathscr{F}_1$ and 
$\mathscr{D}$, respectively. } 

\vspace{3mm}
The precise statement of this results is given in Theorem \ref{thmnew:maintheorem} below. We distinguish three main cases:

\noindent\textbf{Case I:} The initial condition $X_0\in \mathscr{F}_0$, $\theta >0$, and 
\begin{equation}\label{caseI}
B_t(X_t) = \theta \|X_t\|_{\mathscr{F}_0}^{\alpha_0}X_t.
\end{equation}
\noindent\textbf{Case II:} The initial condition 
 $X_0 \in \mathscr{D}$, $\theta >0$, and 
\begin{equation}\label{caseII}
B_t(X_t) = \theta\|X_t\|_{\mathscr{F}_1}^{\alpha_1}X_t. 
\end{equation}
\noindent\textbf{Case III:} The initial condition $X_0 \in \mathscr{F}_1$, $\theta >0$, and 
\begin{equation}\label{caseIII}
B_t(X_t) = \theta \|X_t\|_{\mathscr{F}_0}^{\alpha_0}X_t.
\end{equation}

\begin{remark}
Typical models which can be covered by Case I are compressible and incompressible viscous models (e.g. viscous rotating shallow water models), while typical models covered by Case II and Case III are compressible inviscid (e.g. inviscid rotating shallow water model) and, respectively, incompressible models (e.g. Euler equation). For explicit details regarding these connections and further examples, see Section \ref{sect:applications}. 
\end{remark}
\begin{remark}
We emphasize that the role of the space $\mathscr{F}_1$ in Case III is, in a certain sense, different from the role of the same space in the first two cases and it should be read in the following key: in Case III $\mathscr{F}_1$ \textbf{is} the main space of existence of the solution, and the noise depends on the norm of the solution in a larger space, that is the $\mathscr{F}_0$ norm. We could choose the space of existence to be $\mathscr{F}_0$ as in the previous two cases, but then we would need to introduce a new larger space between $\mathscr{F}_0$ and $\mathscr{G}$, say $\mathscr{F}^{\star}$ which would be used to construct the stochastic term. The analysis is identical in any of these two situations, so we decided not to introduce an extra space, and to proceed with the analysis in $\mathscr{D}\hookrightarrow \mathscr{F}_1 \hookrightarrow \mathscr{F}_0 \hookrightarrow \mathscr{G}$. Note that in order to stop the blow-up in a generic functional space $\mathscr{F}^{\star}$ we need to make sure that the evolution of the corresponding solution can first be controlled in a smaller/higher order space.
\end{remark}
\begin{remark}
It must be emphasized that for the compressible inviscid model we cannot replace the control in $\mathscr{F}_0$ with control in $\mathscr{D}$, as for showing control in $\mathscr{D}$ we need control in $\mathscr{F}_0$. That is, overall we need all four spaces $\mathscr{D}, \mathscr{F}_1, \mathscr{F}_0$ and $\mathscr{G}$, and we use both Proposition \ref{prop:unifcontrol} and Proposition \ref{prop:unifcontrolinD} to ensure the required uniform control.
\end{remark}

In order to obtain global existence, the following assumptions $(A)$ will need to be satisfied by the d-dimensional approximation $(X^d)_d$:\\
\begin{itemize}
\item \textbf{Assumption (A1):} The initial condition is in $\mathscr{F}_0$ and for $a\in\mathcal{D}^d$ it holds that
\begin{equation}\label{cond:compressviscous}
\langle a, A^d(a)\rangle_{\mathscr{F}_0} \leq C_1 \|a\|_{\mathscr{F}_0}^{\gamma_1} -C_2\|a\|_{\mathscr{F}_1}^{2}
\end{equation}
where $C_1, \gamma_1$ are positive constants and $C_2 \in \mathbb{R}$.

For $a\in \mathscr{F}_1$
\begin{equation}
\|a\|_{\mathscr{F}_0} \leq C_0\|a\|_{\mathscr{F}_1}^{\alpha}\|a\|_{\mathscr{G}}^{\beta}
\end{equation}
and
\begin{equation}\label{cond:tight1}
\|A^d(a)\|_{\mathscr{G}} \leq C_1\|a\|_{\mathscr{F}_0}^{\gamma^1}+
C_2\|a\|_{\mathscr{F}_1}^{\gamma^2}+C_3,
\end{equation}
where $\alpha, \beta, \gamma^i, C_i, i\in \{1,2,3\}$ are positive constants. 
Moreover, for any $a,b \in \mathscr{F}_0$
\begin{equation}\label{cond:convA1}
\|A^d(a) - A^d(b)\|_{\mathscr{G}} \leq C(1+\|a\|_{\mathscr{F}_1}+ \|b\|_{\mathscr{F}_1})\|a - b\|_{\mathscr{F}_0}.
\end{equation}
with $C$ a positive constant. 
\item \textbf{Assumption (A2):} The initial condition is in $\mathscr{D}$, \eqref{cond:compressviscous}, \eqref{cond:convA1}, and the following conditions hold:
\begin{equation}\label{cond:compressinviscid}
\langle a, A^d(a)\rangle_{\mathscr{D}} \leq C_1 \|a\|_{\mathscr{F}_1}^{\gamma_1}\|a\|_{\mathscr{D}}^{\gamma_2}
\end{equation}

\begin{equation}
\|a\|_{\mathscr{F}_1} \leq C\|a\|_{\mathscr{D}}\|a\|_{\mathscr{G}}
\end{equation}
where $C_1, C, \gamma_1, \gamma_2$ are positive constants. Condition \eqref{cond:tight1} holds with $C_2=0$.
We also assume that the approximate equation \eqref{eqnew:approxgeneralnew} admits a  solution in $\mathscr{F}_0$ which does not hit zero or, if it hits zero, it remains zero. 
\item \textbf{Assumption (A3):} The initial condition is in $\mathscr{F}_1$ and the following conditions hold:
\begin{equation}\label{cond:incompress}
\langle a, A^d(a)\rangle_{\mathcal{F}_1} \leq C_1 \|a\|_{\mathscr{F}_0}^{\gamma_{13}}\|a\|_{\mathscr{F}_1}^2
\end{equation}
%For $a\in\mathscr{F}_1$
\begin{equation}
\|a\|_{\mathscr{F}_0} \leq C\|a\|_{\mathscr{F}_1}\|a\|_{\mathscr{G}}
\end{equation}
where $C_1, C, \gamma_1, \gamma_2, \gamma$ are positive constants. Condition \eqref{cond:tight1} holds with $C_1=0$.
Moreover, for any $a,b \in \mathscr{F}_0$
\begin{equation}\label{cond:convA3}
\|A^d(a) - A^d(b)\|_{\mathscr{G}} \leq C(1+\|a\|_{\mathscr{F}_1}+ \|b\|_{\mathscr{F}_1})\|a - b\|_{\mathscr{F}_1}.
\end{equation}
with $C$ a positive constant. 
\end{itemize}

\begin{remark}$\left.\right.$\\[-5mm]
\begin{itemize}
\item [a.] In practice, condition \eqref{cond:compressviscous} can be obtained directly from a condition of the form
\begin{equation}\label{Bcompressviscousgenold}
{}_{\mathscr{D}}\langle a, A^d(a)\rangle_{\mathscr{G}} \leq C_1 \|a\|_{\mathscr{F}_0}^{\gamma_1} + C_2\|a\|_{\mathscr{F}_1}^{\gamma_2}
\end{equation}
which is natural for a nonlinear drift which contains a higher order term (and frequently with $\gamma_2=2$). The equivalence of the scalar products $\langle \cdot,\cdot \rangle_{\mathscr{F}_0}$ and ${}_\mathscr{D}\langle \cdot, \cdot \rangle_{\mathscr{G}}$ is due to condition \eqref{condscproduct}.
\item [b.] We highlight that one does not necessarily need an extra space in case II but then we need to go to a smaller space in order to have convergence of the stochastic integral, and in that case the initial condition needs to be in $\mathscr{D}$ which is more restrictive. 
\item [c.] It is  noteworthy that conditions such as \eqref{cond:compressviscous}, \eqref{cond:compressinviscid}, and \eqref{cond:incompress} are natural for geophysical fluid dynamics models. Condition \eqref{cond:incompress}, for instance, can be verified for an incompressible model: the fact that the $\mathscr{F}_1$-norm appears on both sides is possible due to the incompressibility condition. Unlike that, condition \eqref{cond:compressinviscid} can be verified for a compressible inviscid model, but the fact that the $\mathscr{D}$-norm appears on both sides is possible due to the finite-dimensionality of the space $\mathscr{D}^d$ which is dense in $\mathcal{D}$ and which is the space of existence for the finite-dimensional approximating system. Condition $\eqref{cond:compressviscous}$ is, in some sense, typical for compressible viscous models with $\mathcal{F}_0$-valued solutions (see particular cases in e.g. \cite{LangCrisanMemin} and \cite{CrisanLangSALTSRSW}). We call \textit{"space of existence"} the space where the uniform norm is controlled. This is just a shortened terminology we use here, as one can notice that the full functional spaces where the respective solutions live is 
$L^2\left(\Omega; C\left((0,T), \mathscr{I}\right)\right)$
where $\mathscr{I}$ is equal to  $\mathscr{F}_0$, $\mathscr{F}_1$ or
$\mathscr{D}$ endowed, respectively, with the following norms:
\begin{equation*}
\begin{aligned}
&\hbox{\textbf{Case I}} \quad \quad \quad  ||| X |||_{\mathscr{F}_{0},\mathscr{F}_{1},T}^{2}:=\sup_{t\in \left[ 0,T\right]}\|X_{t}\|_{\mathscr{F}_{0}}^2+\int_{0}^{T}\|X_{t}\|_{\mathscr{F}_{1}}^{2}dt\\
&\hbox{\textbf{Case II}} \quad \quad \quad   |||X|||_{\mathscr{D},T}:=\sup_{t\in \left[ 0,T\right] }\|X_{t}\|_{\mathscr{D}}\\
&\hbox{\textbf{Case III}} \quad \quad \quad |||X|||_{\mathscr{F}_{1},T}:=\sup_{t\in \left[ 0,T\right] }\|X_{t}\|_{\mathscr{F}_{1}}.
\end{aligned}
\end{equation*}
\item [d.] In condition \eqref{cond:compressviscous} we have $C_2 > 0$ for compressible viscous models such as viscous rotating shallow water (Case I) and $C_2 < 0$ for compressible inviscid models such as standard (inviscid) rotating shallow water (Case II). This is because in the second case there is no dissipation in the original deterministic drift. Instead, we "create" dissipation from the stochastic term, to counter-balance the effect of the nonlinear term. In order to further ensure global existence, the intrinsic properties of the stochastic term are instrumental (due to e.g. \eqref{gawareckimandrekar} or \eqref{revuzyor}).
\item [e.] Under similar conditions, in a sequel to this work we will show that the (time-homogeneous
version of the) process $X$ which solves the equation has an invariant measure on $\mathscr{G}$. To do this we will show it is a Markov process and
that its corresponding semigroup $\left\{ P_{t},t\geq 0\right\} $ is Feller.
Moreover, for an arbitrary $\mu $ on $\mathscr{G}$ we define the set of
measures 
\begin{equation*}
\mu _{t}(A)=\frac{1}{t}\int_{0}^{t}\int_{\mathscr{G}}P(s,x,A)ds\mu
(dx),~~t\geq 0
\end{equation*}%
for all $A\in \mathscr{B}(\mathscr{G})$. Then we will show that $\left\{ \mu
_{t}\right\} _{t\geq 0}$ is tight and that any weak limit of a subsequence $%
\left\{ \mu _{t_{n}}\right\} _{t_{n}\geq 0}$ such that $\left\{
t_{n}\right\} _{n=1}^{\infty }\subset \mathbb{R}_{+},t_{n}\rightarrow \infty 
$ is an invariant measure for $X$. The key result used to prove tightness is
the control given in Proposition \ref{prop:unifcontrol}.
\item [f.] %{\color{blue} 
Note that for showing uniqueness we would need a locally Lipschitz condition such as
\begin{equation}\label{cond:convgen}
\|A^d(a) - A^d(b)\|_{\mathscr{G}} \leq C(\|a\|_{\mathscr{F}_1}, \|b\|_{\mathscr{F}_1})\|a - b\|_{\mathscr{G}}
\end{equation}
where $C$ is continuous as a function of $a$ and $b$ and it is independent from the dimension $d$.
This implies uniqueness of the maximal solution (which we have already assumed) and then we can deduce uniqueness of the newly-constructed global solution: we know that any maximal solution $\hat{X}$ is unique up to a maximal stopping time $\mathcal{T}$. That is $\hat{X} = X$ on $[0,\mathcal{T})$. However, we show that $X$ itself is a \textbf{global} solution. As a consequence, we need to have $\mathbb{P}\left( \mathcal{T} = \infty \right)=1$.
Note also that condition \eqref{cond:convgen} is a generalised version of \eqref{cond:convA1} and \eqref{cond:convA3}, so assumptions $(A1)-(A3)$ imply directly uniqueness for the maximal solution.
\end{itemize}
\end{remark}

\section{Main results}\label{sect:mainresults}
Our main theorem is given below: 
\begin{theorem}\label{thmnew:maintheorem}
\label{thmnew:maintheorem-v2} Under assumptions $(A1)-(A3)$, the
stochastic partial differential equation \eqref{eqnew:maingeneralnew} admits
a global strong solution $X$ with paths that belong to the space $C\left(
[0,\infty ),\mathscr{G}\right) ,$ $\mathbb{P}$-almost surely, such that for any $T>0$%
, 
\begin{equation}
\mathbb{P}\left( \sup_{t\in \left[ 0,T\right] }\left\vert \left\vert X_{s}\right\vert
\right\vert _{\mathscr{G}}<\infty \right) =1
\end{equation}
\end{theorem}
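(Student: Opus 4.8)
\emph{Sketch of proof.} The plan is to obtain the global solution as a limit of the finite-dimensional Galerkin approximations $X^d$ solving \eqref{eqnew:approxgeneralnew}, which are already known to be global, in four stages: (i) $d$-independent a priori bounds holding only \emph{in probability}; (ii) tightness of the laws of $(X^d)_d$ on a suitable path space; (iii) identification of a subsequential limit as a martingale solution via a Skorokhod-type representation; (iv) an upgrade to a probabilistically strong, global solution using pathwise uniqueness. Throughout, the recurring difficulty -- addressed at the end -- is that the super-linear noise makes the stochastic term only a local martingale, with no available moment bounds.

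\emph{Stage (i): uniform control.} Fix $T>0$. The core estimate comes from applying It\^o's formula to $\varphi(\|X^d_t\|^2_{\mathscr F_i})$, where $\varphi$ is an increasing, strictly concave scale function whose growth at infinity (e.g.\ logarithmic) is tuned to the exponent $\alpha_i$ of the noise and to the growth constants in \eqref{cond:compressviscous}, \eqref{cond:compressinviscid}, \eqref{cond:incompress}. Since $\varphi'>0$ while $\varphi''<0$, the It\^o correction generated by the super-linear diffusion \eqref{caseI}--\eqref{caseIII} is multiplied by $\varphi''$ and produces a negative term which, under the matching conditions on $(\alpha_i,\theta)$, dominates the (possibly growing) drift contribution of $A^d$ controlled through \eqref{cond:compressviscous}, \eqref{cond:compressinviscid}, \eqref{cond:incompress}. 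This gives a pathwise bound of the form
\begin{equation*}
\varphi\big(\|X^d_t\|^2_{\mathscr F_i}\big)\ \le\ \varphi\big(\|X_0\|^2_{\mathscr F_i}\big)+m\,t+\tilde M^d_t-\epsilon\,\langle\tilde M^d\rangle_t ,
\end{equation*}
with constants $m,\epsilon>0$ independent of $d$ and $\tilde M^d$ a continuous local martingale. As $t\mapsto\tilde M^d_t-\epsilon\langle\tilde M^d\rangle_t$ is a time-changed Brownian motion with strictly negative drift, Proposition \ref{revuzyor} bounds it from above $\mathbb P$-almost surely with a law independent of $d$; hence $\sup_d\mathbb P\big(\sup_{t\le T}\|X^d_t\|_{\mathscr F_i}>R\big)\to0$ as $R\to\infty$. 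This is the content of Proposition \ref{prop:unifcontrol} (and of Proposition \ref{prop:unifcontrolinD} for the $\mathscr D$-control needed in Case II, obtained by a second scale-function argument once the lower-order norm has been controlled and using that the approximation does not hit zero); in the viscous Case I the same computation also yields a uniform-in-probability bound on $\int_0^T\|X^d_t\|^2_{\mathscr F_1}dt$.

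\emph{Stages (ii)--(iii): tightness and limit.} Working on $C([0,T];\mathscr G)$, intersected with $L^2(0,T;\mathscr F_1)$ in the viscous case, I would establish tightness of the laws of $(X^d)_d$ by a compactness/Aldous criterion adapted to the absence of $L^2(\Omega)$-moments: one localises on the events from Stage (i) on which \eqref{cond:tight1} gives genuine bounds on $\|A^d(X^d)\|_{\mathscr G}$ (hence equicontinuity of the drift part) and a localised Burkholder--Davis--Gundy estimate controls the stochastic part; combining this with the compact embeddings \eqref{embeddings} and an Aubin--Lions argument, then letting the localisation level tend to infinity and invoking Stage (i) again, one obtains tightness on the path space (a Gawarecki--Mandrekar-type criterion, cf.\ \eqref{gawareckimandrekar}). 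Extracting a weakly convergent subsequence and applying the Skorokhod representation theorem, one passes to a new stochastic basis carrying $\tilde X^d\to\tilde X$ almost surely; the local Lipschitz bounds \eqref{cond:convA1}/\eqref{cond:convA3} permit passage to the limit in the drift integral (again after localisation) and the martingale terms are identified in the usual way, so $\tilde X$ is a martingale solution of \eqref{eq:maingeneral}.

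\emph{Stage (iv): strong solution, globality, and the main obstacle.} Assumptions (A1)--(A3) imply pathwise uniqueness via the locally Lipschitz estimate \eqref{cond:convgen} (of which \eqref{cond:convA1} and \eqref{cond:convA3} are special cases), so the Gy\"ongy--Krylov characterisation of convergence in probability upgrades the martingale solution to a probabilistically strong solution $X$ on the original basis, with $X^d\to X$ in probability and paths in $C([0,T];\mathscr G)$; consistency on nested intervals (again by uniqueness) produces paths in $C([0,\infty);\mathscr G)$. Globality is then immediate from Stage (i): the a priori bound holds on every $[0,T]$, so $\mathbb P(\sup_{t\le T}\|X_t\|_{\mathscr F_0}<\infty)=1$; were the maximal time $\mathcal T$ finite on an event of positive probability, the maximality criterion in the Definition of solutions (namely $\limsup_{t\to\mathcal T}\|X_t\|_{\mathscr F_0}=\infty$ when $\mathcal T<\infty$) would be violated there, a contradiction, so $\mathcal T=\infty$ $\mathbb P$-a.s.\ and a fortiori $\mathbb P(\sup_{t\le T}\|X_t\|_{\mathscr G}<\infty)=1$ for every $T>0$. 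I expect the genuinely hard part to be everything that the lack of moments infects: since the stochastic term is only a local martingale and possibly not integrable, the standard Flandoli--Gatarek chain of uniform $L^2(\Omega)$-bounds is unavailable, and the a priori estimates, the tightness criterion and the identification of the limit of the stochastic integrals all have to be carried out in probability only, via stopping-time localisation together with the pathwise control of $\tilde M^d-\epsilon\langle\tilde M^d\rangle$ from Proposition \ref{revuzyor}.
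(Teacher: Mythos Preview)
Your proposal is correct and follows the same overall architecture as the paper: scale-function/log It\^o computation on $\|X^d_t\|^2$ yielding a pathwise bound of the form $m t+\tilde M^d_t-\epsilon\langle\tilde M^d\rangle_t$ controlled via Proposition~\ref{revuzyor}; Aldous-type tightness in $C([0,T];\mathscr G)$ using only the probability bounds (the paper does this directly, without Aubin--Lions, via \eqref{cond:tight1} and Proposition~\ref{gawareckimandrekar}); Skorokhod representation and passage to the limit term by term.

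Two points of divergence are worth flagging. First, for the convergence of the stochastic integral the paper does not rely on ``the usual way'' but on Jakubowski's continuity theorem for It\^o integrals in Hilbert spaces (Theorem~4 in \cite{Jacubowski95}) applied to the tight family $(\tilde X^d,\tilde B(\tilde X^d),\tilde W^d)$, followed by a splitting $a_d+b_d$ and a dedicated lemma (Lemma~\ref{lemma:conv}) to show $\int_0^t\|\tilde B^d(\tilde X^d_s)-\tilde B(\tilde X_s)\|_{\mathscr G}^2\,ds\to 0$ in probability; your sketch underplays this step, which is exactly where the interpolation inequality and the case-dependent uniform controls enter. Second, for the upgrade to a strong solution the paper does not invoke Gy\"ongy--Krylov: it instead uses the standing hypothesis that \eqref{eq:maingeneral} already admits a unique maximal strong solution $(X,\mathcal T)$, and argues (Remark~(f)) that the newly constructed global solution must coincide with it up to $\mathcal T$, forcing $\mathbb P(\mathcal T=\infty)=1$. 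Your Gy\"ongy--Krylov route is a cleaner, self-contained alternative that does not require pre-assuming a maximal strong solution; the paper's route is shorter given that assumption, but leaves the passage from the martingale solution on $(\tilde\Omega,\tilde{\mathbb P})$ back to the original basis implicit.
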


\begin{remark}
As we shall see, in each of the three cases, one can prove stronger bounds
for the process $X$ that hold for any $T>0$. They are as follows: 
\begin{eqnarray}
&&\mathbf{Case \ I: \quad}\mathbb{P}\left( \sup_{t\in \left[ 0,T\right] }\left\vert
\left\vert X_{s}\right\vert \right\vert _{\mathscr{F}_{0}}+\int_{0}^{T}\left%
\vert \left\vert X_{s}\right\vert \right\vert _{\mathscr{F}%
_{1}}^{2}ds<\infty \right)  =1.  \label{c1bd} \\
&&\mathbf{Case \ II: \quad}\mathbb{P}\left( \sup_{t\in \left[ 0,T\right] }\left\vert
\left\vert X_{s}\right\vert \right\vert _{\mathscr{D}}<\infty \right)  =1.
\label{c2bd} \\
&&\mathbf{Case \ III: \quad}\mathbb{P}\left( \sup_{t\in \left[ 0,T\right] }\left\vert
\left\vert X_{s}\right\vert \right\vert _{\mathscr{F}_{1}}<\infty \right) 
=1.  \label{c3bd}
\end{eqnarray}
Following from the above controls one can deduce, in each of the three
cases, that 
\begin{equation}
\mathbb{P}\left( \int_{0}^{T}\left\vert \left\vert A_{s}(X_{s})\right\vert \right\vert
_{\mathscr{G}}ds+\int_{0}^{T}\left\vert \left\vert B_{s}(X_{s})\right\vert
\right\vert _{\mathscr{G}}^{2}ds<\infty \right) =1.  \label{c4bd}
\end{equation}%
In other words that the process X is a local semimartingale (the martingale
part of X is a local martingale and not a genuine martingale). The bound (\ref%
{c4bd}) also confirms the continuity of the martigale $X$ in $\mathscr{G}$
and that the solution of \eqref{eqnew:maingeneralnew} is indeed  strong.
\end{remark}

\begin{remark}
The common denominator of the above conditions is that 
\begin{equation*}
\mathbb{P}\left( \sup_{t\in \left[ 0,T\right] }\left\vert \left\vert X_{s}\right\vert
\right\vert _{\mathscr{F}_{0}}\right) =1
\end{equation*}%
in other words $X$ has paths in $L^{\infty }\left( [0,T],\mathscr{F}%
_{0}\right) ,$ $P$-almost surely, for any $T>0$.
\end{remark}

The proof of Theorem \ref{thmnew:maintheorem} will be presented in three parts, depending on the different values of $k$ and $\alpha_{i}$. More precisely:

\noindent\textbf{Proof of Theorem \ref{thmnew:maintheorem}}: 

In order to prove Theorem \ref{thmnew:maintheorem} we need the uniform
control and tightness for the approximating sequence. These are proven below
in Section \ref{sect:unifcontrol} and Section \ref{sect:tightness},
respectively. By Proposition \ref{tightness}, $(X^{d})_{d}$ is tight in $%
\mathscr{G}$, and therefore it has a weakly (in distribution)
convergent subsequence which we re-index by $(X^{d})_{d}${. By the Skorokhod
representation theorem, there exists a probability space $\left( \tilde{%
\Omega},\tilde{\mathcal{F}},\tilde{\mathbb{P}}\right) $ on which the
processes $\tilde{X}^{d}$ are defined and such that they have the same
distribution as $X^{d}$ and $\tilde{X}^{d}$ converges to a process $\tilde{X}
$, $\mathbb{\tilde{P}}$ - almost surely. The sequence }$(\tilde{X}%
_{t}^{d})_{d}$ inherits all the distributional properties from the sequence $%
(X^{d})_{d}.$ In particular the bounds in Propositions (\ref{prop:unifcontrol}) and (\ref{prop:unifcontrolinD}) are satisfied and
therefore $(\tilde{X}_{t}^{d})_{d}$ satisfies the controls (\ref{c1bd}), (\ref{c2bd}), (\ref{c3bd}), (\ref{c4bd}). By Fatou's lemma also the limiting
process {$\tilde{X}$ satisfies the same controls. }Moreover on {$\left( 
\tilde{\Omega},\tilde{\mathcal{F}},\tilde{\mathbb{P}}\right) $ there exists
a sequence of Bownian motions }$(\tilde{W}_{t}^{d})_{d}${\ such that} 
\begin{equation}
\tilde{X}_{t}^{d}=\tilde{X}_{0}^{d}+\displaystyle\int_{0}^{t}\tilde{A}%
_{s}^{d}(\tilde{X}_{s}^{d})ds+\displaystyle\int_{0}^{t}\tilde{B}_{s}^{d}(%
\tilde{X}_{s}^{d})d\tilde{W}_{s}^{d}  \label{eq:approx01}
\end{equation}%
more precisely 
\begin{equation}
\tilde{X}_{t}^{d}=\tilde{X}_{0}^{d}+\displaystyle\int_{0}^{t}\tilde{A}%
_{s}^{d}(\tilde{X}_{s}^{d})ds+\displaystyle\int_{0}^{t}\theta \displaystyle%
\Vert \tilde{X}_{s}^{d}\Vert _{\mathscr{F}_{i}}^{\alpha _{i}}\tilde{X}%
_{s}^{d}d\tilde{W}_{s}^{d}  \label{eq:approx02}
\end{equation}%
on $[0,T]$, with $i\in \{0,1\}$. 

{It also straightforward to deduce that the extended sequence }$(\tilde{X}%
^{d},\tilde{B}(\tilde{X}),\tilde{W}^{d})_{d}$ is tight in $\mathscr{G}\times 
$ $\mathscr{G}\times \mathbb{R}$ from which we can deduce by Theorem 4 in \cite{Jacubowski95} that there exists a subsequence,  for which we keep the same
index, such that
\begin{equation*}
\left( \tilde{X}^{d},\tilde{B}(\tilde{X}),\tilde{W}^{d},\int_{0}^{\cdot }%
\tilde{B}(\tilde{X}_{s})\tilde{W}_{s}^{d}\right) _{d}{\ }
\end{equation*}%
converges in distribution to 
\begin{equation*}
\left( \tilde{X},\tilde{B}(\tilde{X}),\tilde{W},\int_{0}^{\cdot }\tilde{B}(%
\tilde{X}_{s})\tilde{W}_{s}\right) _{d},
\end{equation*}%
where $\tilde{W}$ is a Brownian motion.  {By using one more time the
Skorokhod representation theorem, we can assume that all processes are
defined on the same probability space which we still denote by $\left( 
\tilde{\Omega},\tilde{\mathcal{F}},\tilde{\mathbb{P}}\right) $ and the
convergence is in }$\mathscr{G}\times $ $\mathscr{G}\times \mathbb{R\times }%
\mathscr{G}.$ Also, $\left( \tilde{X}^{d}\right) ${\ satisfies (\ref%
{eq:approx01}) and (\ref{eq:approx02}) on the new space.  }We will pass to
the limit in $\mathscr{G}$ each term of the equation \eqref{eq:approx01} for 
$\tilde{X}^{d}$. To show that the limiting process \~{X} {%
\eqref{eqnew:maingeneralnew}}. 

This, of course, holds immediately for the terms $\tilde{X}_{t}^{d}$ and $%
\tilde{X}_{0}^{d}$, we need to show that both $\int_{0}^{t}\tilde{A}_{s}^{d}(%
\tilde{X}_{s}^{d})ds$ and $\int_{0}^{t}\tilde{B}_{s}^{d}(\tilde{X}%
_{s}^{d})dW_{s}^{d}$ converge to $\int_{0}^{t}\tilde{A}(\tilde{X})ds$ and $%
\int_{0}^{t}\tilde{B}(\tilde{X})dW$, respectively, \textbf{in $\mathscr{G}$}.
 
For the first integral we will use that
\begin{equation}
\begin{aligned}
\|\tilde{A}^d(\tilde{X}^d)-\tilde{A}(\tilde{X})\|_{\mathscr{G}} \leq \|\tilde{A}^d(\tilde{X}^d) - \tilde{A}^d(\tilde{X})\|_{\mathscr{G}} + \|\tilde{A}^d(\tilde{X}) - \tilde{A}(\tilde{X})\|_{\mathscr{G}}. 
\end{aligned}
\end{equation}
The control of the terms above depends on the initial assumptions, which are different in each of the three cases. We need, therefore, to treat each of these cases separately. 

\noindent\textbf{Case I:}
We have from Assumption $(A1)$ that
\begin{equation}
\begin{aligned}\label{conv1}
\|\tilde{A}^d(\tilde{X}^d) - \tilde{A}^d(\tilde{X})\|_{\mathscr{G}} \leq C(1 + \|\tilde{X}^d\|_{\mathscr{F}_1} +  \|\tilde{X}\|_{\mathscr{F}_1})\|\tilde{X}^d - \tilde{X}\|_{\mathscr{F}_0} %\xrightarrow{d\rightarrow \infty} 0.
\end{aligned}
\end{equation}
But we know from Proposition \ref{tightness} and Proposition \ref{prop:unifcontrol} that $(\tilde{X}^d)_d$ is tight in $\mathscr{G}$ and there is a uniform control for $\displaystyle\int_0^t\|\tilde{X}_s^d\|_{\mathscr{F}_1}ds$. Then 
\begin{equation}
\|\tilde{X}^d - \tilde{X}\|_{\mathscr{F}_0} \xrightarrow{d\rightarrow \infty} 0. 
\end{equation}
It also holds that 
\begin{equation}
\begin{aligned}
\|\tilde{A}^d(\tilde{X}) - \tilde{A}(\tilde{X})\|_{\mathscr{G}} \xrightarrow{d\rightarrow \infty} 0
\end{aligned}
\end{equation}
since $\tilde{A}^d$ is the projection of $\tilde{A}$ through the operator $T_d$. 
More precisely, in the first case we have that%
\begin{equation*}
\begin{aligned}
\int_{0}^{t}\Vert \tilde{A}^{d}(\tilde{X}_{s})-\tilde{A}(\tilde{X}_{s})\Vert _{\mathscr{G}}ds &
\leq \int_{0}^{t}\Vert \tilde{A}^{d}(\tilde{X}_{s})\Vert _{\mathscr{G}}+\Vert
\tilde{A}(\tilde{X}_{s})\Vert _{\mathscr{G}}ds \leq 2\int_{0}^{t}\Vert \tilde{A}(\tilde{X}_{s})\Vert _{\mathscr{G}}ds \\
&\leq 2C_1\left( \int_{0}^{t}\left\Vert \tilde{X}_{s}\right\Vert _{\mathscr{F}_{0}}^{\gamma ^1}ds+C_2\int_{0}^{t}\left\Vert \tilde{X}_{s}\right\Vert_{\mathscr{F}_{1}}^{2}ds+1\right) 
\end{aligned}
\end{equation*}
which holds due to Assumption $(A1)$ (with $\gamma^2=2$) for $\tilde{A}^d$ and Fatou lemma. 
Then since 
\begin{equation*}
\lim_{d\rightarrow \infty }\Vert \tilde{A}^{d}(\tilde{X}_{s})-\tilde{A}(\tilde{X}_{s})\Vert_{\mathscr{G}}=0
\end{equation*}
it follows by the dominated convergence theorem that 
\begin{equation*}
\displaystyle\lim_{d\rightarrow \infty }\int_{0}^{t}\|\tilde{A}^{d}(\tilde{X}_{s})-\tilde{A}(\tilde{X}_{s})\| _{\mathscr{G}}ds=0
\end{equation*}%
$\mathbb{P}$-almost surely and therefore also in probability. 
Next we also have that 
\begin{equation*}
\begin{aligned}
\| \tilde{A}^{d}(\tilde{X}_{s}^{d})-\tilde{A}^{d}(\tilde{X}_{s})\Vert _{\mathscr{G}} &\leq \|
\tilde{A}(\tilde{X}_{s}^{d})-\tilde{A}(\tilde{X}_{s})\Vert _{\mathscr{G}} \leq C\left( 1+\|\tilde{X}_{s}^{d}\|_{\mathscr{F}_{1}} + \|
\tilde{X}_{s}\Vert _{\mathscr{F}_{1}}\right) \| \tilde{X}_{s}^{d}-\tilde{X}_{s}\|_{\mathscr{F}_{0}}
\end{aligned}
\end{equation*}
based on the same assumption $(A1)$ and Fatou lemma.
Hence
\begin{equation*}
\left(\int_{0}^{t}\|A^{d}(X_{s}^{d})-A^{d}(X_{s})\|_{\mathscr{G}%
}ds\right) ^{2}\leq 4C^2 \int_{0}^{t}\left( 1+ \|X_{s}^{d}\|_{\mathscr{F}_{1}}^2+ \|
X_{s}\Vert _{\mathscr{F}_{1}}^2\right)
ds\int_{0}^{t}\| X_{s}^{d}-X_{s}\|_{\mathscr{F}_{0}}^{2}ds
\end{equation*}%
and therefore 
\begin{equation*}
\begin{aligned}
\lim_{d\rightarrow \infty }\left( \int_{0}^{t}\| A^{d}(X_{s}^{d})-A^{d}(X_{s})\|_{\mathscr{G}}ds\right) ^{2} &\leq 
\displaystyle\limsup_{d\rightarrow\infty}\displaystyle\int_0^t\left( 1+ \|X_{s}^{d}\|_{\mathscr{F}_{1}}^2+ \|
X_{s}\Vert _{\mathscr{F}_{1}}^2\right)ds
\lim_{d\rightarrow \infty }\int_{0}^{t}\|
X_{s}^{d}-X_{s}\|_{\mathscr{F}_{0}}^{2}ds \\& = 0 \\
%&\leq 2\int_{0}^{t}\left(\|X_{s}\|_{\mathscr{F}_{0}}^{2\gamma}\right) ds\times 0.
\end{aligned}
\end{equation*}
Note that the first term is uniformly bounded in $d$ from Proposition \ref{prop:unifcontrol} hence the limit is indeed 0.

\noindent\textbf{Case II:} This follows using exactly the same arguments, since condition \eqref{cond:convA1} is the same. 
We have, more precisely,
\begin{equation*}
\begin{aligned}
\int_{0}^{t}\Vert \tilde{A}^{d}(\tilde{X}_{s})-\tilde{A}(\tilde{X}_{s})\Vert _{\mathscr{G}}ds 
&\leq 2C_1 \int_{0}^{t}\left\Vert \tilde{X}_{s}\right\Vert _{\mathscr{F}_{0}}^{\gamma^1}ds
\end{aligned}
\end{equation*}
and
\begin{equation*}
\begin{aligned}
\| \tilde{A}^{d}(\tilde{X}_{s}^{d})-\tilde{A}^{d}(\tilde{X}_{s})\Vert _{\mathscr{G}} &\leq \|
\tilde{A}(\tilde{X}_{s}^{d})-\tilde{A}(\tilde{X}_{s})\Vert_{\mathscr{G}} \leq C\left( 1+\|\tilde{X}_{s}^{d}\|_{\mathscr{F}_{1}} + \|
\tilde{X}_{s}\Vert _{\mathscr{F}_{1}}\right) \| \tilde{X}_{s}^{d}-\tilde{X}_{s}\|_{\mathscr{F}_{0}}
\end{aligned}
\end{equation*}
and the limit when $d\rightarrow\infty$ is zero as before.

\noindent\textbf{Case III:}
From Proposition \ref{tightness} and Proposition \ref{prop:unifcontrol},
\begin{equation}
\|\tilde{X}^d - \tilde{X}\|_{\mathscr{F}_1} \xrightarrow{d\rightarrow \infty} 0. 
\end{equation}
Also
\begin{equation}
\begin{aligned}
\|\tilde{A}^d(\tilde{X}) - \tilde{A}(\tilde{X})\|_{\mathscr{G}} \xrightarrow{d\rightarrow \infty} 0
\end{aligned}
\end{equation}
by the same arguments as before. 
Then
\begin{equation*}
\begin{aligned}
\int_{0}^{t}\Vert \tilde{A}^{d}(\tilde{X}_{s})-\tilde{A}(\tilde{X}_{s})\Vert _{\mathscr{G}}ds 
&\leq &2C_2 \int_{0}^{t}\left\Vert \tilde{X}_{s}\right\Vert _{\mathscr{F}_{1}}^{\gamma^2}ds
\end{aligned}
\end{equation*}
Then by the dominated convergence theorem 
\begin{equation*}
\displaystyle\lim_{d\rightarrow \infty }\int_{0}^{t}\|\tilde{A}^{d}(\tilde{X}_{s})-\tilde{A}(\tilde{X}_{s})\| _{\mathscr{G}}ds=0
\end{equation*}%
$\mathbb{P}$-almost surely and therefore also in probability. 
Next we have that 
\begin{equation}
\begin{aligned}
\| \tilde{A}^{d}(\tilde{X}_{s}^{d})-\tilde{A}^{d}(\tilde{X}_{s})\Vert _{\mathscr{G}} &\leq \|
\tilde{A}(\tilde{X}_{s}^{d})-\tilde{A}(\tilde{X}_{s})\Vert _{\mathscr{G}} \leq C\left( 1+\|\tilde{X}_{s}^{d}\|_{\mathscr{F}_{1}} + \|
\tilde{X}_{s}\Vert _{\mathscr{F}_{1}}\right) \| \tilde{X}_{s}^{d}-\tilde{X}_{s}\|_{\mathscr{F}_{1}}
\end{aligned}
\end{equation}
based on assumption $(A3)$ and Fatou lemma.
Hence
\begin{equation*}
\left(\int_{0}^{t}\|A^{d}(X_{s}^{d})-A^{d}(X_{s})\|_{\mathscr{G}%
}ds\right) ^{2}\leq 4C^2 \int_{0}^{t}\left( 1+ \|X_{s}^{d}\|_{\mathscr{F}_{1}}^2+ \|
X_{s}\Vert _{\mathscr{F}_{1}}^2\right)
ds\int_{0}^{t}\| X_{s}^{d}-X_{s}\|_{\mathscr{F}_{1}}^{2}ds
\end{equation*}%
and 
\begin{equation*}
\begin{aligned}
\lim_{d\rightarrow \infty }\left( \int_{0}^{t}\| A^{d}(X_{s}^{d})-A^{d}(X_{s})\|_{\mathscr{G}}ds\right) ^{2}  = 0 \\
%&\leq 2\int_{0}^{t}\left(\|X_{s}\|_{\mathscr{F}_{0}}^{2\gamma}\right) ds\times 0.
\end{aligned}
\end{equation*}
as before. 

We now justify the convergence of the stochastic integral. Let 
\begin{equation}
\begin{aligned}
a_d(t) := \displaystyle\int_0^t \tilde{B}_s(\tilde{X}_s)d\tilde{W}_s^d - \displaystyle\int_0^t \tilde{B}_s(\tilde{X}_s)d\tilde{W}_s^d
\end{aligned}
\end{equation}
\begin{equation}
\begin{aligned}
b_d(t) := \displaystyle\int_0^t \tilde{B}_s^d(\tilde{X}_s^d)d\tilde{W}_s^d - \displaystyle\int_0^t \tilde{B}_s(\tilde{X}_s)d\tilde{W}_s
\end{aligned}
\end{equation}
We need to show that $a_d(t) + b_d(t) \xrightarrow{d\rightarrow\infty} 0$ in probability.
We already know that  $a_d(t) \xrightarrow{d\rightarrow\infty} 0$ almost surely in $\mathscr{G}$
(hence in probability). By Lemma \ref{lemma:conv} below 
\begin{equation}
\displaystyle\int_0^t \|\tilde{B}^d(\tilde{X^d}) - \tilde{B}(\tilde{X})\|_{\mathscr{G}}^2ds \xrightarrow{d\rightarrow \infty}0
\end{equation}
in probability. Due to this and using Proposition \ref{gawareckimandrekar} we can write
\begin{equation}
\mathbb{P}\left( \displaystyle\sup_{s\in[0,t]}\|b_d(s)\|_{\mathscr{G}} > \epsilon\right) \leq \frac{\eta}{\epsilon^2} + \mathbb{P}\left(\displaystyle\int_0^t \|\tilde{B}_s^d(\tilde{X_s^d}) - \tilde{B}_s(\tilde{X}_s)\|_{\mathscr{G}}^2 > \eta \right).
\end{equation}
Then we can choose $\eta:=\frac{\delta}{2\epsilon^2}$ and $d$ sufficiently large such that
\begin{equation}
\mathbb{P}\left(\displaystyle\int_0^t \|\tilde{B}_s^d(\tilde{X_s^d}) - \tilde{B}_s(\tilde{X}_s)\|_{\mathscr{G}}^2 > \frac{\delta}{2\epsilon^2} \right) < \frac{\delta}{2}
\end{equation}
which gives that also $b_d(t) \xrightarrow{d\rightarrow\infty} 0$ in probability.

\vspace{10mm}

\begin{lemma}\label{lemma:conv}
We have
\begin{equation}
\displaystyle\int_0^t \|\tilde{B}_s^d(\tilde{X_s^d}) - \tilde{B}_s(\tilde{X}_s)\|_{\mathscr{G}}^2ds \xrightarrow{d\rightarrow \infty}0
\end{equation}
in probability.
\end{lemma}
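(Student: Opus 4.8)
The plan is to reduce the statement to a local continuity estimate for the map $x\mapsto\|x\|_{\mathscr{F}_i}^{\alpha_i}x$ and then to transfer to it the convergence and the uniform bounds already obtained for the drift term. Since $\tilde X_s^d$ is $\mathscr{D}^d$-valued and $B_s(a)=\theta\|a\|_{\mathscr{F}_i}^{\alpha_i}a$ is a scalar multiple of $a$, the projection $B_s^d=T_d\circ B_s$ acts as the identity on $\mathscr{D}^d$, so $\tilde B_s^d(\tilde X_s^d)=\theta\|\tilde X_s^d\|_{\mathscr{F}_i}^{\alpha_i}\tilde X_s^d$ and
\[
\tilde B_s^d(\tilde X_s^d)-\tilde B_s(\tilde X_s)=\theta\bigl(\|\tilde X_s^d\|_{\mathscr{F}_i}^{\alpha_i}\tilde X_s^d-\|\tilde X_s\|_{\mathscr{F}_i}^{\alpha_i}\tilde X_s\bigr).
\]
Splitting $\|a\|^{\alpha_i}a-\|b\|^{\alpha_i}b=\|a\|^{\alpha_i}(a-b)+(\|a\|^{\alpha_i}-\|b\|^{\alpha_i})b$, using the reverse triangle inequality and the local Lipschitz continuity of $t\mapsto t^{\alpha_i}$ (its $\alpha_i$-H\"older continuity if $\alpha_i<1$), together with the embedding $\mathscr{F}_i\hookrightarrow\mathscr{G}$, one gets a pointwise bound of the form
\[
\bigl\|\tilde B_s^d(\tilde X_s^d)-\tilde B_s(\tilde X_s)\bigr\|_{\mathscr{G}}\le C\bigl(1+\|\tilde X_s^d\|_{\mathscr{F}_i}+\|\tilde X_s\|_{\mathscr{F}_i}\bigr)^{\alpha_i+1}\|\tilde X_s^d-\tilde X_s\|_{\mathscr{F}_i},
\]
with the obvious modification of the last factor when $\alpha_i<1$.

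Two ingredients feed into this. First, $\{\sup_{s\le t}\|\tilde X_s^d\|_{\mathscr{F}_i}\}_d$ is bounded in probability: in Case~I this is Proposition~\ref{prop:unifcontrol}, while in Cases~II and~III it follows from Proposition~\ref{prop:unifcontrolinD}, resp.\ Proposition~\ref{prop:unifcontrol}, combined with the embeddings $\mathscr{D}\hookrightarrow\mathscr{F}_1$, resp.\ $\mathscr{F}_1\hookrightarrow\mathscr{F}_0$, and by Fatou the limit $\tilde X$ satisfies the same bound. Second, $\int_0^t\|\tilde X_s^d-\tilde X_s\|_{\mathscr{F}_i}^2\,ds\to0$ in probability, obtained exactly as in the treatment of the drift term: in Cases~II and~III from the multiplicative inequalities $\|a\|_{\mathscr{F}_1}\le C\|a\|_{\mathscr{D}}\|a\|_{\mathscr{G}}$, resp.\ $\|a\|_{\mathscr{F}_0}\le C\|a\|_{\mathscr{F}_1}\|a\|_{\mathscr{G}}$, applied to $a=\tilde X_s^d-\tilde X_s$, using the uniform bound and the a.s.\ convergence $\sup_{s\le t}\|\tilde X_s^d-\tilde X_s\|_{\mathscr{G}}\to0$ furnished by the Skorokhod representation; in Case~I from $\|f\|_{\mathscr{F}_0}\le C\|f\|_{\mathscr{F}_1}^m\|f\|_{\mathscr{G}}^{1-m}$ together with H\"older's inequality in time and the uniform control on $\int_0^T\|\tilde X_s^d\|_{\mathscr{F}_1}^2\,ds$.

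It remains to combine the two, and here a localization is needed since the uniform bounds hold only in probability, not in expectation. Squaring and integrating the pointwise estimate gives
\[
\int_0^t\bigl\|\tilde B_s^d(\tilde X_s^d)-\tilde B_s(\tilde X_s)\bigr\|_{\mathscr{G}}^2\,ds\le C\int_0^t\bigl(1+\|\tilde X_s^d\|_{\mathscr{F}_i}+\|\tilde X_s\|_{\mathscr{F}_i}\bigr)^{2(\alpha_i+1)}\|\tilde X_s^d-\tilde X_s\|_{\mathscr{F}_i}^2\,ds.
\]
Given $\epsilon,\delta>0$, pick $M$ such that the event $\Omega_d^M:=\{\sup_{s\le t}\|\tilde X_s^d\|_{\mathscr{F}_i}\le M\}\cap\{\sup_{s\le t}\|\tilde X_s\|_{\mathscr{F}_i}\le M\}$ has probability at least $1-\delta$ for every $d$; on $\Omega_d^M$ the prefactor is dominated by a constant $C(M)$, hence
\[
\mathbb{P}\Bigl(\int_0^t\bigl\|\tilde B_s^d(\tilde X_s^d)-\tilde B_s(\tilde X_s)\bigr\|_{\mathscr{G}}^2\,ds>\epsilon\Bigr)\le\delta+\mathbb{P}\Bigl(C(M)\int_0^t\|\tilde X_s^d-\tilde X_s\|_{\mathscr{F}_i}^2\,ds>\epsilon\Bigr),
\]
and the last probability tends to $0$ as $d\to\infty$ by the second ingredient; letting $\delta\downarrow0$ gives the claim. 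The only real difficulty is to arrange, via the interpolation and embedding inequalities, that $\tilde X^d-\tilde X$ is measured in the very norm $\mathscr{F}_i$ that enters the noise, and to absorb the absence of moment bounds through the localization above; the rest is routine.
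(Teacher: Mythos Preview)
Your argument is correct and follows essentially the same route as the paper's proof: the same algebraic splitting $\|a\|^{\alpha_i}a-\|b\|^{\alpha_i}b=\|a\|^{\alpha_i}(a-b)+(\|a\|^{\alpha_i}-\|b\|^{\alpha_i})b$, the same localization in probability to replace the (only bounded-in-probability) prefactors by constants, and the same interpolation step to upgrade $\mathscr{G}$-convergence of $\tilde X^d-\tilde X$ to $\mathscr{F}_i$-convergence. The paper carries out Case~I explicitly, splitting into two pieces $c_d$ and $e_d$ (the first controlled directly by $\mathscr{G}$-convergence, the second via interpolation), and then remarks that the other cases are analogous; you instead absorb everything into a single bound with $\|\tilde X_s^d-\tilde X_s\|_{\mathscr{F}_i}$ on the right and handle the three cases uniformly, which is slightly more economical but not a genuinely different idea.
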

\begin{proof}
\noindent\textbf{Case I}: We can write
\begin{equation*}
\begin{aligned}
\|\tilde{B}_s^d(\tilde{X_s^d}) - \tilde{B}_s(\tilde{X}_s)\|_{\mathscr{G}} & \leq \|\tilde{X}_s^d\|_{\mathscr{F}_0}\left(\|\tilde{X}_s^d - \tilde{X}_s\|_{\mathscr{G}} \right) + \left(\|X_s^d\|_{\mathscr{F}_0} - \|\tilde{X}_s\|_{\mathscr{F}_0}\right)\|\tilde{X}_s\|_{\mathscr{G}} \\
& \leq \displaystyle\sup_{s}\|\tilde{X}_s^d\|_{\mathscr{F}_0}\left(\|\tilde{X}_s^d - \tilde{X}_s\|_{\mathscr{G}} \right) + \displaystyle\sup_{s}\|\tilde{X}_s\|_{\mathscr{G}}\left(\|\tilde{X}_s^d\|_{\mathscr{F}_0} - \|\tilde{X}_s\|_{\mathscr{F}_0}\right)
\end{aligned}
\end{equation*}
We will prove that each of these two terms converges to zero in probability. 
Let 
\begin{equation*}
c_d(t):= \displaystyle\sup_{s\in[0,t]}\|\tilde{X}_s^d\|_{\mathscr{F}_0}\displaystyle\int_0^t \|\tilde{X}_s^d - \tilde{X}_s\|_{\mathscr{G}} ds
\end{equation*}

\begin{equation*}
e_d(t):= \displaystyle\sup_{s\in[0,t]}\|\tilde{X}_s\|_{\mathscr{G}}\displaystyle\int_0^t\|\tilde{X}_s^d -\tilde{X}_s\|_{\mathscr{F}_0}ds.
\end{equation*}
Note that since $X_s^d \rightarrow X_s$ in $\mathscr{G}$ and 
\begin{equation*}
\|\tilde{X}_s^d-\tilde{X}_s\|_{\mathscr{G}} \leq C\displaystyle\sup_{s}\|\tilde{X}_s\|_{\mathscr{G}} \leq C\displaystyle\sup_{s}\|\tilde{X}_s\|_{\mathscr{F}_0}
\end{equation*}
which is bounded in probability by Proposition \ref{prop:unifcontrol} and the Fatou lemma, we have
\begin{equation*}\label{eqlemma:conv}
\displaystyle\lim_{d\rightarrow\infty} \displaystyle\int_0^t\|\tilde{X}_s^d-\tilde{X}_s\|_{\mathscr{G}} ds = 0
\end{equation*}
in probability.
We can choose $R$ such that 
\begin{equation}
\mathbb{P}\left(\displaystyle\sup_{s\in[0,T]}\|\tilde{X}_s^d\|_{\mathscr{F}_0} >R\right) < \frac{\delta}{2}
\end{equation}
and $d$ such that
\begin{equation}
\mathbb{P}\left(\displaystyle\int_0^t\|\tilde{X}_s^d-\tilde{X}_s\|_{\mathscr{G}}ds > \frac{\epsilon}{R} \right) < \frac{\delta}{2}.
\end{equation}
It follows that
\begin{equation*}
\begin{aligned}
\mathbb{P}\left( c_d(t) \geq \epsilon \right) & = \mathbb{P}\left( c_d(t) \geq \epsilon, \displaystyle\sup_{s\in[0,T]}\|\tilde{X}_s^d\|_{\mathscr{F}_0} >R\right) + \mathbb{P}\left( c_d(t) \geq \epsilon, \displaystyle\sup_{s\in[0,T]}\|\tilde{X}_s^d\|_{\mathscr{F}_0} < R   \right)\\
& \leq \frac{\delta}{2} + \mathbb{P}\left( c_d(t) \geq \epsilon, \displaystyle\sup_{s\in[0,T]}\|\tilde{X}_s^d\|_{\mathscr{F}_0} >R, \displaystyle\int_0^t\|\tilde{X}_s^d-\tilde{X}_s\|_{\mathscr{G}}ds > \frac{\epsilon}{R}\right) \\
& + \mathbb{P}\left( c_d(t) \geq \epsilon, \displaystyle\sup_{s\in[0,T]}\|\tilde{X}_s^d\|_{\mathscr{F}_0} >R, \displaystyle\int_0^t\|\tilde{X}_s^d-\tilde{X}_s\|_{\mathscr{G}}ds < \frac{\epsilon}{R}\right) \\
& \leq \frac{\delta}{2} + \mathbb{P}\left(\displaystyle\int_0^t\|\tilde{X}_s^d-\tilde{X}_s\|_{\mathscr{G}}ds > \frac{\epsilon}{R} \right) \\
& \leq \frac{\delta}{2} + \frac{\delta}{2} = \delta.
\end{aligned}
\end{equation*}
For showing convergence (in probability) of $e_d(t)$ we proceed in a similar way, using the fact that $\displaystyle\sup_{s}\|\tilde{X}_s\|_{\mathscr{G}}$ is bounded in probability and that
\begin{equation*}
\begin{aligned}
\displaystyle\int_0^t\|\tilde{X}_s^d-\tilde{X}_s\|_{\mathscr{F}_0}ds & \leq \displaystyle\int_0^t\|\tilde{X}_s^d-\tilde{X}_s\|_{\mathscr{F}_1}^{\theta^{\star}}\|\tilde{X}_s^d-\tilde{X}_s\|_{\mathscr{G}}^{1-\theta^{\star}}ds \\
& \leq \left(\displaystyle\int_0^t\|\tilde{X}_s^d-\tilde{X}_s\|_{\mathscr{F}_1}ds \right)^{1/2} \left( \displaystyle\int_0^t \|\tilde{X}_s^d-\tilde{X}_s\|_{\mathscr{G}}^{1-\theta^{\star}}\right)^{1/2}
\end{aligned}
\end{equation*}
and the last term converges to zero in probability, while the integral of the $\mathscr{F}_1$ norm of the difference is bounded in probability. 
The control in \eqref{lemma:conv} holds also in Case II and Case III, by the same arguments. 
\end{proof}

\subsection{Uniform control}\label{sect:unifcontrol}
\begin{proposition}\label{prop:unifcontrol}
The approximating sequence $(X_t^d)_d$ is $\mathbb{P}$-a.s. uniformly controlled in $\mathscr{F}_0$ in the uniform norm, and in $\mathscr{F}$ in $L^2$ sense. That is for any $\epsilon_1,\epsilon_2 >0$ there exists $\mathcal{K}_1,\mathcal{K}_2 >0$ such that
\begin{equation}\label{eq:unifcontrol}
\displaystyle\sup_d \mathbb{P}\left( \displaystyle\sup_t \|X_t^d\|_{\mathscr{F}_0}^2 \geq \mathcal{K}_1\right) \leq \epsilon_1
\end{equation}
and 
\begin{equation}\label{eq:unifcontrolint}
\displaystyle\sup_d \mathbb{P}\left( \displaystyle\int_0^T \|X_s^d\|_{\mathscr{F}_1}^2ds \geq \mathcal{K}_2\right) \leq \epsilon_2.
\end{equation}
\end{proposition}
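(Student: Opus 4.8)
The plan is to apply Itô's formula to $\varphi(\|X_t^d\|_{\mathscr{F}_0}^2)$ for a carefully chosen increasing concave ``scale function'' $\varphi$, exploiting the fact that the Itô correction produced by the super-linear noise dominates the nonlinear drift at large values of the norm. I describe the argument for Case~I ($i=0$); Cases~II and~III follow the same scheme with the $\mathscr{F}_i$-norm and Assumptions~(A2), resp.\ (A3), in place of~(A1), the $\mathscr{D}$-level control needed in Case~II being precisely Proposition~\ref{prop:unifcontrolinD} (it is required because in the inviscid case the $\mathscr{F}_0$-drift is not dissipative and must itself be controlled through a higher-order space).

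Set $Y_t^d:=\|X_t^d\|_{\mathscr{F}_0}^2$. Since $B_t^d(X_t^d)=\theta\|X_t^d\|_{\mathscr{F}_0}^{\alpha_0}X_t^d$ and $X_t^d$ is finite-dimensional, Itô's formula gives $dY_t^d=\big(2\langle X_t^d,A_t^d(X_t^d)\rangle_{\mathscr{F}_0}+\theta^2(Y_t^d)^{\alpha_0+1}\big)dt+2\theta(Y_t^d)^{(\alpha_0+2)/2}\,dW_t^d$; by \eqref{cond:compressviscous} the drift coefficient is $\le\bar b(Y_t^d)-2C_2\|X_t^d\|_{\mathscr{F}_1}^2$ with $\bar b(y):=2C_1y^{\gamma_1/2}+\theta^2y^{\alpha_0+1}$ and $C_2>0$, and the squared diffusion coefficient is $\sigma^2(y):=4\theta^2y^{\alpha_0+2}$. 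Because the standing hypotheses ensure the noise is sufficiently superlinear (concretely, it suffices that $\alpha_0>\gamma_1/2-1$), one has $2\bar b(y)/\sigma^2(y)=\tfrac1{2y}+O(y^{\gamma_1/2-\alpha_0-2})$ with the remainder integrable at $+\infty$; I then take $\varphi$ to be, on $[1,\infty)$, the solution of $\varphi''=-\big(2\bar b/\sigma^2+2\epsilon\varphi'\big)\varphi'$ --- a \emph{linear} ODE $w'=(2\bar b/\sigma^2)\,w+2\epsilon$ for $w:=1/\varphi'$ --- extended to a $C^2$ increasing function on $[0,\infty)$ with $\varphi(0)=0$. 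Then $w$ is positive and increasing with $w(y)\ge w(1)+2\epsilon(y-1)$, so $\varphi$ is increasing, concave, $\varphi'(y)\asymp 1/y$ and $\varphi(y)\asymp\log y\to\infty$; in particular $\varphi^{-1}$ is well defined and controlling $\varphi(Y)$ controls $Y$. By the choice of $\varphi$, the drift of $\varphi(Y_t^d)$ is $\le-\epsilon\,\varphi'(Y_t^d)^2\sigma^2(Y_t^d)-2C_2\,\varphi'(Y_t^d)\|X_t^d\|_{\mathscr{F}_1}^2$ on $\{Y_t^d\ge1\}$ and bounded by a constant on $\{Y_t^d<1\}$; hence, writing $M_t^d:=\int_0^t2\theta\,\varphi'(Y_s^d)(Y_s^d)^{(\alpha_0+2)/2}\,dW_s^d$ for the continuous local martingale part ($M_0^d=0$), for all $t\in[0,T]$,
\begin{equation*}
\varphi(Y_t^d)+2C_2\int_0^t\varphi'(Y_s^d)\|X_s^d\|_{\mathscr{F}_1}^2\,ds\ \le\ \varphi(Y_0^d)+cT+M_t^d-\epsilon\langle M^d\rangle_t,
\end{equation*}
with $c$ independent of $d$ since by Assumption~(A1) the constants in \eqref{cond:compressviscous} are $d$-independent.

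The decisive step --- and the main obstacle --- is to pass from this pathwise inequality to a bound uniform in $d$ \emph{in probability}, given that $M^d$ is only a local martingale and that no integrability is available, not even a finite first moment, for $Y_0^d$ or for $M^d$. This is exactly what the extra $-\epsilon\langle M^d\rangle$ term provides: by the Dambis--Dubins--Schwarz theorem $M_t^d=\beta^d_{\langle M^d\rangle_t}$ for a Brownian motion $\beta^d$, so $M_t^d-\epsilon\langle M^d\rangle_t\le\sup_{u\ge0}(\beta^d_u-\epsilon u)=:\Xi_d$, and by Proposition~\ref{revuzyor} the law of $\Xi_d$ is $\mathrm{Exp}(2\epsilon)$, \emph{the same for every $d$}. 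Dropping the nonnegative integral term and using $\|X_0^d\|_{\mathscr{F}_0}\le\|X_0\|_{\mathscr{F}_0}$ (orthogonality of the Galerkin projection) together with the monotonicity of $\varphi$ yields $\sup_{t\in[0,T]}\varphi(Y_t^d)\le\varphi(\|X_0\|_{\mathscr{F}_0}^2)+cT+\Xi_d$, hence for all $d$ and all $K>3cT$,
\begin{equation*}
\mathbb{P}\Big(\sup_{t\in[0,T]}\varphi(Y_t^d)\ge K\Big)\ \le\ \mathbb{P}\big(\varphi(\|X_0\|_{\mathscr{F}_0}^2)\ge\tfrac K3\big)+e^{-2\epsilon K/3},
\end{equation*}
and both terms are $d$-independent and tend to $0$ as $K\to\infty$. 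Taking $K$ large and setting $\mathcal{K}_1:=\varphi^{-1}(K)$ gives \eqref{eq:unifcontrol}.

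For \eqref{eq:unifcontrolint} I keep the integral term: from the displayed inequality and $\varphi(Y_t^d)\ge0$ one gets $2C_2\int_0^T\varphi'(Y_s^d)\|X_s^d\|_{\mathscr{F}_1}^2\,ds\le\varphi(\|X_0\|_{\mathscr{F}_0}^2)+cT+\Xi_d$, and on the event $E_d:=\{\sup_{t\in[0,T]}Y_t^d\le\mathcal{K}_1\}$ concavity gives $\varphi'(Y_s^d)\ge\varphi'(\mathcal{K}_1)>0$, so $\int_0^T\|X_s^d\|_{\mathscr{F}_1}^2\,ds\le\big(2C_2\varphi'(\mathcal{K}_1)\big)^{-1}\big(\varphi(\|X_0\|_{\mathscr{F}_0}^2)+cT+\Xi_d\big)$ on $E_d$. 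Since $\mathbb{P}(E_d^c)$ is controlled uniformly in $d$ by the first part and the right-hand side has a $d$-independent distributional bound, the same three-summand splitting (initial condition, deterministic term, $\Xi_d$) produces $\mathcal{K}_2$ with $\sup_d\mathbb{P}\big(\int_0^T\|X_s^d\|_{\mathscr{F}_1}^2\,ds\ge\mathcal{K}_2\big)\le\epsilon_2$; to honour the two quantifiers one first fixes $\mathcal{K}_1$ large enough that $\mathbb{P}(E_d^c)\le\min(\epsilon_1,\epsilon_2/2)$ for all $d$, and then fixes $\mathcal{K}_2$. In Cases~II and~III the same computation is carried out with $\|X_t^d\|_{\mathscr{F}_1}^2$ replacing $\|X_t^d\|_{\mathscr{F}_0}^2$ and \eqref{cond:compressinviscid}, resp.\ \eqref{cond:incompress}, replacing \eqref{cond:compressviscous}.
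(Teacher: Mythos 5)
Your Case I argument is essentially the paper's own proof, with cosmetic refinements: your ODE-built scale function has $\varphi(y)\asymp\log y$ and plays exactly the role of the paper's $\log(C+\|X_t^d\|_{\mathscr{F}_0}^2)$, the decisive step of bounding $\sup_t\bigl(M_t^d-\epsilon\langle M^d\rangle_t\bigr)$ by a $d$-independent exponential random variable is the paper's use of Proposition \ref{revuzyor} (your DDS formulation is fine), and your extraction of $\int_0^T\|X_s^d\|_{\mathscr{F}_1}^2ds$ from the dissipative term on the event $\{\sup_t\|X_t^d\|_{\mathscr{F}_0}^2\le\mathcal{K}_1\}$ is a clean variant of the paper's return to its pre-logarithm inequality. (You do use $\|T_dX_0\|_{\mathscr{F}_0}\le\|X_0\|_{\mathscr{F}_0}$; this is harmless in the paper's spectral setting \eqref{condscproduct}, but it is an assumption on $T_d$, not a consequence of orthogonality in $\mathscr{D}$ alone.)

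The genuine gap is your one-sentence dispatch of Cases II and III, and above all Case II. Your prescription ``replace $\|X_t^d\|_{\mathscr{F}_0}^2$ by $\|X_t^d\|_{\mathscr{F}_1}^2$ and \eqref{cond:compressviscous} by \eqref{cond:compressinviscid}'' does not prove the present proposition in Case II: \eqref{cond:compressinviscid} bounds the pairing $\langle a,A^d(a)\rangle_{\mathscr{D}}$ and is the input to the separate $\mathscr{D}$-control of Proposition \ref{prop:unifcontrolinD}, not to the $\mathscr{F}_0$/$\mathscr{F}_1$ bounds \eqref{eq:unifcontrol}--\eqref{eq:unifcontrolint} asserted here. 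In Case II one must still run the argument on $\|X_t^d\|_{\mathscr{F}_0}^2$ using \eqref{cond:compressviscous}, where now $C_2$ may be negative, so there is no dissipative drift term at all: your mechanism for \eqref{eq:unifcontrolint} (keeping $2C_2\int\varphi'\|X\|_{\mathscr{F}_1}^2$ on the left) collapses, and the $\mathscr{F}_1$-integral has to be harvested instead from the quadratic variation $\tfrac{\epsilon}{2}\langle N\rangle$, which contains $\|X\|_{\mathscr{F}_1}^{2\alpha_1}$ precisely because the noise weight is the $\mathscr{F}_1$-norm; the anti-dissipative $-C_2\|X\|_{\mathscr{F}_1}^2$ term must likewise be absorbed by that Itô correction. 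Moreover, in Cases II and III the drift and the squared diffusion of $Y_t=\|X_t^d\|_{\mathscr{F}_i}^2$ are not functions of $Y_t$ alone (they mix two different norms), so your construction of $\varphi$ as the solution of a scalar ODE in $y$ is not even defined there; the paper instead uses the explicit logarithm and a structural cancellation (e.g.\ $\alpha_0=\gamma_{13}/2$ and $\theta$ large in Case III). Finally, in Case II the additive constant inside the logarithm must be dropped (otherwise the correction term no longer dominates when $\|X\|_{\mathscr{F}_0}$ is small while $\|X\|_{\mathscr{F}_1}$ is large), which is exactly why Assumption (A2) requires the approximate solution not to hit zero --- a point your $\varphi(0)=0$ construction does not accommodate. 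Case III is closer to a routine adaptation (the sup of the $\mathscr{F}_1$-norm controls both claims), but as written your reduction of Cases II and III is not a proof.
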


\begin{proof}
We first prove \eqref{eq:unifcontrol} and then deduce \eqref{eq:unifcontrolint} from it, when the space $\mathscr{F}_1$ exists. 
In order to prove \eqref{eq:unifcontrol}, we first show that for any $\tilde{\epsilon}>0$ there exists $\tilde{\mathcal{K}}$ such that 
\begin{equation}
\displaystyle\sup_d \mathbb{P}\left( \displaystyle\sup_t  \log(C+\|X_t^d\|_{\mathscr{F}_0}^2 )\geq \tilde{\mathcal{K}}\right) \leq \tilde{\epsilon}
\end{equation}
and then the conclusion will follow. 
By the It\^{o} formula we have 
\begin{equation}
\begin{aligned}
d\|X_t^d\|_{\mathscr{F}_0}^2 & = \tilde{A}(X_t^d)dt + \tilde{B}(X_t^d)dW_t^d
\end{aligned}
\end{equation}
where
\begin{equation}
\tilde{A}(X_t^d):= \left(2{}_\mathscr{D}\langle X_t^d, A^d(X_t^d)\rangle_{\mathscr{G}} + \|B_t(X_t^d)\|_{L_2(\mathscr{U}, \mathscr{F}_0)}^2\right)
\end{equation}
and 
\begin{equation}
\tilde{B}(X_t^d)dW_t^d:= \langle X_t^d, B_t(X_t^d)dW_t\rangle_{\mathscr{F}_0}
\end{equation}
Then
\begin{equation}\label{logeqn}
\begin{aligned}
d\log  \left( C+ \|X_t^d\|_{\mathscr{F}_0}^2 \right) &= \frac{1}{C+ \|X_t^d\|_{\mathscr{F}_0}^2}\left(\tilde{A}_t^d(X_t^d)dt + \tilde{B}_t^d(X_t^d)dW_t^d \right) - \frac{1}{2(C+ \|X_t^d\|_{\mathscr{F}_0}^2)^2}\tilde{B}_t
^d(X_t^d)^2dt \\
& = \frac{1}{C+ \|X_t^d\|_{\mathscr{F}_0}^2}\tilde{A}_t^d(X_t^d)dt - \frac{1-\epsilon}{2(C+ \|X_t^d\|_{\mathscr{F}_0}^2)^2}\tilde{B}_t
^d(X_t^d)^2dt \\
& + \frac{1}{C+ \|X_t^d\|_{\mathscr{F}_0}^2}\tilde{B}_t^d(X_t^d)dW_t^d  - \frac{\epsilon}{2(C+ \|X_t^d\|_{\mathscr{F}_0}^2)^2}\tilde{B}_t
^d(X_t^d)^2dt.
\end{aligned}
\end{equation}
If we choose
\begin{equation}
M_t:=\displaystyle\int_0^t\tilde{B}^d(X_s^d)dW_s
\end{equation}
then
\begin{equation}
\langle M \rangle_t = \displaystyle\int_0^t\tilde{B}^d(X_s)^2 ds
\end{equation}
As suggested above, we will now need to split the proof in three distinct cases, depending on the specific form of the diffusion term $B$:

\vspace{3mm}
\noindent\textbf{Case I:} the diffusion term is given by
\begin{equation}\label{Bcompressviscousgen}
B_t^d(X_t^d) = \theta\|X_t^d\|_{\mathscr{F}_0}^{\alpha_0}X_t^d.
\end{equation}
In this case
\begin{equation}
\begin{aligned}
\|B_t^d(X_t^d)\|_{L_2(\mathscr{U}, \mathscr{F}_0)}^2 
& = \theta^2\|X_t^d\|_{\mathscr{F}_0}^{2\alpha_0+2} 
\end{aligned}
\end{equation}
\begin{equation}
\tilde{B}_t^d(X_t^d)^2:=(2\langle X_t^d, B_t^d(X_t^d)\rangle)^2 = 4\theta^2 \|X_t^d\|_{\mathscr{F}_0}^{2\alpha_0+4}
\end{equation}
and
\begin{equation}
\begin{aligned}
\tilde{A}^d(X^d) &=2{}_\mathscr{D}\langle X_t^d, A^d(X_t^d)\rangle_{\mathscr{G}} + \|B_t(X_t^d)\|_{L_2(\mathscr{U}, \mathscr{F}_0)}^2 \\
%& = 2{}_\mathscr{D}\langle X_t^d, A^d(X_t^d)\rangle_{\mathscr{G}} + \theta^2\|X_t^d\|_{\mathscr{F}_0}^{2\alpha_0+2} \\
& = 2\langle X_t^d, A^d(X_t^d)\rangle_{\mathscr{F}_0} + \theta^2\|X_t^d\|_{\mathscr{F}_0}^{2\alpha_0+2}\\
& \leq 2C_1\|X_t^d\|_{\mathscr{F}_0}^{\gamma_1} - 2C_2\|X_t^d\|_{\mathscr{F}_1}^{\gamma_2}+ \theta^2\|X_t^d\|_{\mathscr{F}_0}^{2\alpha_0+2}.
\end{aligned}
\end{equation}
due to conditions \eqref{condscproduct} and \eqref{cond:compressviscous}.
Let us define the semimartingale 
\begin{equation}
dM_t := 2\theta_1\langle X_t^d, \|X_t^d\|_{\mathscr{F}_0}^{\alpha_1}X_t^d\rangle dW_t^d.
\end{equation}
Then
\begin{equation}\label{viscousineqgen}
\begin{aligned}
d\|X_t^d\|_{\mathscr{F}_0}^2 + \tilde{C}_2\|X_t^d\|_{\mathscr{F}_1}^{\gamma_2}dt \leq G \left( \|X_t^d\|_{\mathscr{F}_0}\right)dt + dM_t
\end{aligned}
\end{equation}
where $\tilde{C}_2 >0$ and it depends on $C_2$.\footnote{Note that $\tilde{C}_2$ can be equal to $2C_2, C_2$, or something else, depending on the model. See Section \ref{sect:applications} for more explicit details.}
\begin{equation}
G \left( \|X_t^d\|_{\mathscr{F}_0}\right) := \tilde{C}_1\|X_t^d\|_{\mathscr{F}_0}^{\gamma_1} + \theta^2\|X_t^d\|_{\mathscr{F}_0}^{2\alpha_0+2}.
\end{equation}
But then 
\begin{equation}
\begin{aligned}
d\|X_t^d\|_{\mathscr{F}_0}^2  \leq G(\|X_t^d\|_{\mathscr{F}_0})dt + \tilde{B}(X_t^d)dW_t
\end{aligned}
\end{equation}
and for $C>0$ and 
\begin{equation}
\tilde{C}(X_t^d) := C+ \|X_t^d\|_{\mathscr{F}_0}^2
\end{equation}
we can write 
\begin{equation}
\begin{aligned}
d &\log(C+\|X_t^d\|_{\mathscr{F}_0}^2)  \leq \frac{1}{\tilde{C}(X_t^d)^2}\left(\tilde{C}(X_t^d)G(\|X_t^d\|_{\mathscr{F}_0}) - \frac{1-\epsilon}{2}\tilde{B}^d(X_t^d)^2\right)dt + dM_t - \frac{\epsilon}{2}d\langle M\rangle_t\\
& = \frac{1}{\tilde{C}(X_t^d)^2}\left(\tilde{C}(X_t^d)(\tilde{C}_1\|X_t^d\|_{\mathscr{F}_0}^{\gamma_1}+\|B^d(X_t^d)\|_{L_2(\mathscr{U}, \mathscr{F}_0)}^2 ) - \frac{1-\epsilon}{2}\tilde{B}^d(X_t^d)^2\right)dt + dM_t - \frac{\epsilon}{2}d\langle M\rangle_t \\
& = \frac{1}{(C+ \|X_t^d\|_{\mathscr{F}_0}^2)^2} \left( (C+ \|X_t^d\|_{\mathscr{F}_0}^2)(\tilde{C}_1\|X_t^d\|_{\mathscr{F}_0}^{\gamma_1} + \theta^2\|X_t^d\|_{\mathscr{F}_0}^{2\alpha_0+2})- \frac{1-\epsilon}{2}4\theta^2\|X_t^d\|_{\mathscr{F}_0}^{2\alpha_0+4}\right)dt \\
& + dM_t - \frac{\epsilon}{2}d\langle M\rangle_t \\
& = \frac{1}{(C+ \|X_t^d\|_{\mathscr{F}_0}^2)^2} \left(\tilde{C}_{11}\|X_t^d\|_{\mathscr{F}_0}^{\gamma_1} + C\theta^2 \|X_t^d\|_{\mathscr{F}_0}^{2\alpha_0+2} + \tilde{C}_1\|X_t^d\|_{\mathscr{F}_0}^{\gamma_1+2} +  \theta^2\|X_t^d\|_{\mathscr{F}_0}^{2\alpha_0+4} \right. \\
& \left. - \frac{1-\epsilon}{2}4\theta^2\|X_t^d\|_{\mathscr{F}_0}^{2\alpha_0+4}\right)dt + dM_t - \frac{\epsilon}{2}d\langle M\rangle_t \\
& = \frac{1}{(C+ \|X_t^d\|_{\mathscr{F}_0}^2)^2} \left(\tilde{C}_1\|X_t^d\|_{\mathscr{F}_0}^{\gamma_1+2} + \tilde{C}_{11}\|X_t^d\|_{\mathscr{F}_0}^{\gamma_1} + C\theta^2 \|X_t^d\|_{\mathscr{F}_0}^{2\alpha_0+2}  + (2\epsilon-1)\theta^2\|X_t^d\|_{\mathscr{F}_0}^{2\alpha_0+4}\right)dt\\
& + dM_t - \frac{\epsilon}{2}d\langle M\rangle_t.
\end{aligned}
\end{equation}
We want $2\epsilon-1 <0$ and for this reason we choose $\epsilon < \frac{1}{2}$. We know by assumption also that $2\alpha_0>\gamma_1$.
Therefore we control any possible blow-up of the drift term, since there exists a positive constant $R$ such that
\begin{equation}\label{eq:330}
\tilde{C}_1\|X_t^d\|_{\mathscr{F}_0}^{\gamma_1+2} + \tilde{C}_{11}\|X_t^d\|_{\mathscr{F}_0}^{\gamma_1} + C\theta^2 \|X_t^d\|_{\mathscr{F}_0}^{2\alpha_0+2}  + (2\epsilon-1)\theta^2\|X_t^d\|_{\mathscr{F}_0}^{2\alpha_0+4} \leq R.
\end{equation}
Then, as the term on the left hand side of \eqref{eq:330} has negative leading coefficient, we eventually obtain
\begin{equation}
d\log (C+\|X_t^d\|_{\mathscr{F}_0}^2 )\leq k_0 + kt + E(\epsilon)
\end{equation}
with
\begin{equation}
E(\epsilon) := \sup_{t\ge 0}\left(M_t - \frac{\epsilon}{2}\langle M\rangle_t\right).
\end{equation}
Therefore
\begin{equation}
\displaystyle\sup_{t\in[0,T]}\|X_t^d\|_{\mathscr{F}_0}^2 \leq \tilde{E}(\epsilon)
\end{equation}
where
\begin{equation}\label{ed}
\tilde{E}(\epsilon) = \exp (k_0+kT+E(\epsilon))-C.
\end{equation}
Using Exercise 3.16 from Revuz \& Yor we obtain
\begin{equation}
\mathbb{P}\left( \displaystyle\sup_{t\in[0,T]}\|X_t^d\|_{\mathscr{F}_0}^2 \geq \beta_1\right) < \delta_1.
\end{equation}
Note that the random variable on the right hand side of \eqref{ed} is finite, but not integrable. It follows that 
\begin{equation}
\mathbb{P}(\sup_{t\in [0,T]} (C+\|X_t^d\|_{\mathscr{F}_0}) > K)
\le 
\mathbb{P}(E(\epsilon) > \log K - k_0+kT)
\end{equation}
Note also that the quantity on the right hand side 
of the above inequality is independent of the dimension $d$, (as $E(\epsilon)$ is $d$-independent and has an exponential distribution), hence 
\begin{equation}
\displaystyle\lim_{K\rightarrow \infty} \sup_{d} \mathbb{P}(\sup_{t\in [0,T]} {\|X_t^d\|_{\mathscr{F}_0}} > K)
\le 
\displaystyle\lim_{K\rightarrow \infty} \mathbb{P}(E(\epsilon) > \log K - k_0+kT)=0.
\end{equation}
Now we can proceed with controlling $\tilde{C}_2\int_0^t \|X_s^d\|_{\mathscr{F}_1}^{\gamma_2}ds$. From \eqref{viscousineqgen} one has
\begin{equation}
\begin{aligned}
\tilde{C}_2\displaystyle\int_0^t \|X_s^d|_{\mathscr{F}_1}^{\gamma_2} \leq k_0 - \|X_t^d\|_{\mathscr{F}_0}^2 + \displaystyle\int_0^t G(\|X_s^d\|_{\mathscr{F}_0})ds + \displaystyle\int_0^t \tilde{B}_s^d(X_s^d)dW_s.
\end{aligned}
\end{equation}
Let 
\begin{equation}
    \begin{aligned}
    H_s(\|X_t^d\|_{\mathscr{F}_0}) := k_0 - \|X_t^d\|_{\mathscr{F}_0}^2 + \displaystyle\int_0^t G(\|X_s^d\|_{\mathscr{F}_0})ds
    \end{aligned}
\end{equation}
and
\begin{equation}
\tilde{\tilde{B}}_t := \displaystyle\int_0^t \tilde{B}_s^d(X_s^d)dW_s
\end{equation}
Then 
\begin{equation}
\tilde{C}_2\displaystyle\int_0^t \|X_s^d\|_{\mathscr{F}_1}^2 \leq 
H_s(\|X_t^d\|_{\mathscr{F}_0})+\frac{\epsilon}{2}\langle \tilde{\tilde{B}}\rangle_t+\tilde E(\epsilon)
\le C(T,\epsilon)(\sup_{t\in[0,T]}\|X_t^d\|_{\mathscr{F}_0}^{q'})+\tilde E(\epsilon)
\end{equation}
with $
\tilde E(\epsilon) := \sup_{t\ge 0}(\tilde{\tilde{B}}_t - \frac{\epsilon}{2}\langle \tilde{\tilde{B}}\rangle_t)$ and $q'$ suitably chosen.
%(\OL{We don't yet have control on $\displaystyle\sup_{t}\|X_t^d\|_{\mathscr{F}_0}^{q'}$ - I guess we can conclude it directly (as in the paper with E.), since $q'=\gamma_1$ and we've chosen $\gamma_1 \geq 2$.)}
This can now be handled as before, which will eventually give
\begin{equation}
\mathbb{P}\left( \displaystyle\int_0^t\|X_t^d\|_{\mathscr{F}_1}^{\gamma_2}ds \geq \beta_2\right) < \delta_2
\end{equation}
with $\tilde{C}_2, \gamma_2 > 0$.

\vspace{3mm}
\noindent\textbf{Case II:} the diffusion term is given by
\begin{equation}\label{Bcompressinviscidgen}
B_t^d(X_t^d):= \theta\|X_t^d\|_{\mathscr{F}_1}^{\alpha_1}X_t^d. 
\end{equation}
Then
\begin{equation}
\begin{aligned}
\|B_t^d(X_t^d)\|_{L_2(\mathscr{U}, \mathscr{F}_0)}^2 
& = \theta^2\|X_t^d\|_{\mathscr{F}_1}^{2\alpha_1}\|X_t^d\|_{\mathscr{F}_0}^{2} 
\end{aligned}
\end{equation}
\begin{equation}
\tilde{B}_t^d(X_t^d)^2 = 4\theta^2\|X_t^d\|_{\mathscr{F}_1}^{2\alpha_1}\|X_t^d\|_{\mathscr{F}_0}^{4}. 
\end{equation}
and
\begin{equation}
\begin{aligned}
\tilde{A}^d(X^d) &=2{}_\mathscr{D}\langle X_t^d, A^d(X_t^d)\rangle_{\mathscr{G}} + \|B_t(X_t^d)\|_{L_2(\mathscr{U}, \mathscr{F}_0)}^2 \\
%& = 2{}_\mathscr{D}\langle X_t^d, A^d(X_t^d)\rangle_{\mathscr{G}} + \theta^2\|X_t^d\|_{\mathscr{F}_1}^{2\alpha_1}\|X_t^d\|_{\mathscr{F}_0}^{2} \\
& = 2\langle X_t^d, A^d(X_t^d)\rangle_{\mathscr{F}_0} + \theta^2\|X_t^d\|_{\mathscr{F}_1}^{2\alpha_1}\|X_t^d\|_{\mathscr{F}_0}^{2}\\
& \leq 2C_1\|X_t^d\|_{\mathscr{F}_0}^{\gamma_1} - 2C_2\|X_t^d\|_{\mathscr{F}_1}^{\gamma_2}+ \theta^2\|X_t^d\|_{\mathscr{F}_1}^{2\alpha_1}\|X_t^d\|_{\mathscr{F}_0}^{2}.
\end{aligned}
\end{equation}
We have, as before, 
\begin{equation}\label{aneqngen}
\begin{aligned}
d\|X_t^d\|_{\mathscr{F}_0}^2 &= \left(2{}_\mathscr{D}\langle X_t^d, A^d(X_t^d)\rangle_{\mathscr{G}} + \|B_t^d(X_t^d)\|_{L_2(\mathscr{U}, \mathscr{F}_0)}^2\right)dt + 2\langle X_t^d, B_t^d(X_t^d)dW_t\rangle_{\mathscr{F}_0}
\end{aligned}
\end{equation}
and then 
\begin{equation*}\label{loginviscidgen}
\begin{aligned}
&d \log \|X_t^d\|_{\mathscr{F}_0}^2 =\frac{1}{\|X_t^d\|_{\mathscr{F}_0}^2} \left(2{}_\mathscr{D}\langle X_t^d, A^d(X_t^d)\rangle_{\mathscr{G}} + \|B_t^d(X_t^d)\|_{L_2(\mathscr{U}, \mathscr{F}_0)}^2\right)dt + \frac{2}{\|X_t^d\|_{\mathscr{F}_0}^2}\langle X_t^d, B_t^d(X_t^d)dW_t\rangle_{\mathscr{F}_0} \\
& -\frac{2}{\|X_t^d\|_{\mathscr{F}_0}^4}\langle X_t^d, B_t^d(X_t^d)\rangle_{\mathscr{F}_0}^2dt - \frac{2\epsilon}{\|X_t^d\|_{\mathscr{F}_0}^4}\langle X_t^d, B_t^d(X_t^d)\rangle_{\mathscr{F}_0}^2dt + \frac{2\epsilon}{\|X_t^d\|_{\mathscr{F}_0}^4}\langle X_t^d, B_t^d(X_t^d)\rangle_{\mathscr{F}_0}^2dt \\
& = \frac{1}{\|X_t^d\|_{\mathscr{F}_0}^2} \left(2{}_\mathscr{D}\langle X_t^d, A^d(X_t^d)\rangle_{\mathscr{G}} + \|B_t^d(X_t^d)\|_{L_2(\mathscr{U}, \mathscr{F}_0)}^2\right)dt - \frac{2(1-\epsilon)}{\|X_t^d\|_{\mathscr{F}_0}^4}\langle X_t^d, B_t^d(X_t^d)\rangle_{\mathscr{F}_0}^2dt \\
& + dN_t - \frac{\epsilon}{2}d\langle N \rangle_t
\end{aligned}
\end{equation*}
where
\begin{equation*}
N_t := \displaystyle\int_0^t \frac{2}{\|X_s^d\|_{\mathscr{F}_0}^2}\langle X_s^d, B_s^d(X_s^d) \rangle_{\mathscr{F}_0} dW_s
\end{equation*}
The drift above is given by 
\begin{equation*}
p_1:=\frac{2}{\|X_t^d\|_{\mathscr{F}_0}^4} \left( \|X_t^d\|_{\mathscr{F}_0}^2 {}_\mathscr{D}\langle X_t^d, A^d(X_t^d)\rangle_{\mathscr{G}} + \frac{1}{2}\|X_t^d\|_{\mathscr{F}_0}^2\|B_t^d(X_t^d)\|_{L_2(\mathscr{U}, \mathscr{F}_0)}^2 - (1-\epsilon)\langle X_t^d, B_t^d(X_t^d)\rangle_{\mathscr{F}_0}^2 \right)dt 
\end{equation*}
and by condition \eqref{cond:compressviscous} we have
\begin{equation*}
\begin{aligned}
&\frac{2}{\|X_t^d\|_{\mathscr{F}_0}^4} \left( \|X_t^d\|_{\mathscr{F}_0}^2 \langle X_t^d, A^d(X_t^d)\rangle_{\mathscr{F}_0} + \frac{1}{2}\|X_t^d\|_{\mathscr{F}_0}^2\|B_t^d(X_t^d)\|_{L_2(\mathscr{U}, \mathscr{F}_0)}^2 - (1-\epsilon)\langle X_t^d, B_t^d(X_t^d)\rangle_{\mathscr{F}_0}^2 \right)dt \\
& \leq \frac{2}{\|X_t^d\|_{\mathscr{F}_0}^4}\big(
C_1\|X_t^d\|_{\mathscr{F}_0}^{\gamma_1+2}-C_2\|X_t^d\|_{\mathscr{F}_0}^2\|X_t^d\|_{\mathscr{F}_1}^2 +\frac{1}{2}\theta^2\|X_t^d\|_{\mathscr{F}_1}^{2\alpha_1}\|X_t^d\|_{\mathscr{F}_0}^4 
- (1-\epsilon) \theta^2\|X_t^d\|_{\mathscr{F}_1}^{2\alpha_1}\|X_t^d\|_{\mathscr{F}_0}^4\big)dt \\
& \leq 2\left(\left(C_1+(\epsilon - \frac{1}{2})\theta^2)\right)\|X_t^d\|_{\mathscr{F}_1}^{2\alpha_1}\right)dt
\end{aligned}
\end{equation*}
and we can choose $\theta= \frac{2C_1}{1/2- \epsilon}$. This implies that 
\begin{equation*}\label{loginviscidgen'}
d \log \|X_t^d\|_{\mathscr{F}_0}
\le dN_t - \frac{\epsilon}{2}d\langle N \rangle_t
\end{equation*}
Since we chose
$\gamma_1-2<2\alpha_1$, we have 
\begin{equation}
2C_1\|X_t^d\|_{\mathscr{F}_0}^{\gamma_1-2} + \left(C_1 + (\epsilon - \frac{1}{2})\theta^2\right)\|X_t^d\|_{\mathscr{F}_1}^{2\alpha_1}\le 
2C_1\|X_t^d\|_{\mathscr{F}_1}^{\gamma_1-2} + \left(C_1 + (\epsilon -\frac{1}{2}\right)\theta^2)\|X_t^d\|_{\mathscr{F}_1}^{2\alpha_1}
\end{equation}
due to the fact that $\mathscr{F}_1 \hookrightarrow \mathscr{F}_0$ (and we are in a $d$-dimensional space),
which is a polynomial in 
$\|X_t^d\|_{\mathscr{F}_0}$ with negative highest coefficient which is bounded (as before). 
So we have
\begin{equation}
\begin{aligned}
p_1 \leq 2C_1\|X_t^d\|_{\mathscr{F}_0}^{\gamma_1-2} + \left(C_1 + (\epsilon - \frac{1}{2})\theta^2\right)\|X_t^d\|_{\mathscr{F}_0}^{2\alpha_1} dt
\end{aligned}
\end{equation}
and after replacing this in \eqref{loginviscidgen} and using the same argument as per Revuz \& Yor as before we obtain 
\begin{equation}
\mathbb{P}\left( \displaystyle\sup_{t\in[0,T]}\|X_t^d\|_{\mathscr{F}_0}^2 \geq \beta_{11}\right) < \delta_{11}.
\end{equation}
Now in order to obtain
\begin{equation}
\mathbb{P}\left( \displaystyle\sup_{t\in[0,T]}\displaystyle\int_0^t\|X_t^d\|_{\mathscr{F}_1}^{2\alpha_1}ds \geq \beta_{22}\right) < \delta_{22}
\end{equation}
the arguments are similar to those from the previous section (viscous case). 
Therefore $\|X_t^d\|_{\mathscr{F}_0}$ is controlled overall as before (by an exponential, after considering the stochastic integral etc) and we go back to \eqref{aneqngen} to now control $\displaystyle\int_0^t \|X_s^d\|_{\mathscr{F}_1}^2 ds$.
Note that these calculations are in some sense similar in spirit with those from the previous case, although $C(\theta, \alpha_2)$ in 
\begin{equation}\label{eq:last}
\begin{aligned}
d\|X_t^d\|_{\mathscr{F}_0}^2 - C(\theta, \alpha_1)\|X_t^d\|_{\mathscr{F}_1}^2dt \leq G(\|X_t^d\|_{\mathscr{F}_0})dt + \tilde{B}(X_t^d)dW_t
\end{aligned}
\end{equation}
is now positive, but the role played by the $\eta$-viscous term in the previous case is now played by the term $\frac{\epsilon}{2}\langle N\rangle_t$ which was added 'artificially'.

\noindent\textbf{Case III:} the diffusion coefficient is given by
\begin{equation}
B_t^d(X_t^d) = \theta \|X_t^d\|_{\mathscr{F}_0}^{\alpha_0}X_t^d.
\end{equation}
Then
\begin{equation}
\begin{aligned}
\|B_t^d(X_t^d)\|_{L_2(\mathscr{U}, \mathscr{F}_1)}^2 
& = \theta^2\|X_t^d\|_{\mathscr{F}_0}^{2\alpha_0}\|X_t^d\|_{\mathscr{F}_1}^2 \leq \theta^2\|X_t^d\|_{\mathscr{F}_1}^{2\alpha_0+2}
\end{aligned}
\end{equation}
\begin{equation}
\begin{aligned}
\tilde{B}_t^d(X_t^d)^2 &:=(2\langle X_t^d, B^d(X_t^d)\rangle_{\mathscr{F}_1})^2 = (2\theta\|X_t^d\|_{\mathscr{F}_0}^{\alpha_0}\|X_t^d\|_{\mathscr{F}_1}^2)^2 \\
& = 4\theta^2\|X_t^d\|_{\mathscr{F}_0}^{2\alpha_0}\|X_t^d\|_{\mathscr{F}_1}^4 \leq 4\theta^2\|X_t^d\|_{\mathscr{F}_1}^{2\alpha_0+4}
\end{aligned}
\end{equation}
and

\begin{equation}
\begin{aligned}
\tilde{A}^d(X^d) &=2{}_\mathscr{D}\langle X_t^d, A^d(X_t^d)\rangle_{\mathscr{G}} + \|B_t(X_t^d)\|_{L_2(\mathscr{U}, \mathscr{F}_1)}^2 \\
%& = 2{}_\mathscr{D}\langle X_t^d, A^d(X_t^d)\rangle_{\mathscr{G}} + \theta^2\|X_t^d\|_{\mathscr{F}_0}^{2\alpha_0+2} \\
& = 2\langle X_t^d, A^d(X_t^d)\rangle_{\mathscr{F}_1} + \theta^2\|X_t^d\|_{\mathscr{F}_0}^{2\alpha_0}\|X_t^d\|_{\mathscr{F}_1}^2\\
& \leq 2C_1 \|X_t^d\|_{\mathscr{F}_0}^{\gamma_{13}}\|X_t^d\|_{\mathscr{F}_1}^2+ \theta^2\|X_t^d\|_{\mathscr{F}_0}^{2\alpha_0}\|X_t^d\|_{\mathscr{F}_1}^2 \\
& \leq 2C_1 \|X_t^d\|_{\mathscr{F}_0}^{\gamma_{13}}\|X_t^d\|_{\mathscr{F}_1}^2+ \theta^2\|X_t^d\|_{\mathscr{F}_1}^{2\alpha_0+2}.
\end{aligned}
\end{equation}
due to conditions \eqref{condscproduct} and \eqref{cond:incompress}.
Then  \eqref{logeqn} becomes

\begin{equation}
\begin{aligned}
d \log  & \left( C + \|X_t^d\|_{\mathscr{F}_1}^2 \right) = \frac{1}{C+ \|X_t^d\|_{\mathscr{F}_1}^2}\left(\tilde{A}_t^d(X_t^d)dt + \tilde{B}_t^d(X_t^d)dW_t^d \right) - \frac{1}{2(C+ \|X_t^d\|_{\mathscr{F}_1}^2)^2}\tilde{B}_t^d(X_t^d)^2dt \\
& = \frac{1}{C+\|X_t^d\|_{\mathscr{F}_1}^2}\tilde{A}_t^d(X_t^d)dt - \frac{1-\epsilon}{2(C+ \|X_t^d\|_{\mathscr{F}_1}^2)^2}\tilde{B}_t
^d(X_t^d)^2dt \\
& + \frac{1}{C+\|X_t^d\|_{\mathscr{F}_1}^2}\tilde{B}_t^d(X_t^d)dW_t^d  - \frac{\epsilon}{2(C+\|X_t^d\|_{\mathscr{F}_1}^2)^2}\tilde{B}_t^d(X_t^d)^2dt\\
& =\frac{1}{\left(C+\|X_t^d\|_{\mathscr{F}_1}^2\right)^2}\left( (C+\|X_t^d\|_{\mathscr{F}_1}^2)\tilde{A}_t^d(X_t^d)-\frac{1-\epsilon}{2}\tilde{B}_t
^d(X_t^d)^2 \right)dt + dM_t - \frac{\epsilon}{2}d\langle M \rangle_t \\
& \leq \frac{1}{\left(C+\|X_t^d\|_{\mathscr{F}_1}^2\right)^2} \left( (C+\|X_t^d\|_{\mathscr{F}_1}^2) (2C_1 \|X_t^d\|_{\mathscr{F}_0}^{\gamma_{13}}\|X_t^d\|_{\mathscr{F}_1}^2 \right. \\
& \left. + \theta^2\|X_t^d\|_{\mathscr{F}_0}^{2\alpha_0}\|X_t^d\|_{\mathscr{F}_1}^2) - \frac{1-\epsilon}{2}4\theta^2\|X_t^d\|_{\mathscr{F}_0}^{2\alpha_0}\|X_t^d\|_{\mathscr{F}_1}^4\right)dt + dM_t - \frac{\epsilon}{2}d\langle M \rangle_t 
\end{aligned}
\end{equation}
\begin{equation}
\begin{aligned}
& = \frac{1}{\left(C+\|X_t^d\|_{\mathscr{F}_1}^2\right)^2} \left(\tilde{C}_1\|X_t^d\|_{\mathscr{F}_0}^{\gamma_{13}}\|X_t^d\|_{\mathscr{F}_1}^2 + 2C_1\|X_t^d\|_{\mathscr{F}_0}^{\gamma_{13}}\|X_t^d\|_{\mathscr{F}_1}^4+ \theta^2\|X_t^d\|_{\mathscr{F}_0}^{2\alpha_0}\|X_t^d\|_{\mathscr{F}_1}^2 \right. \\
& \left. - 2(1-\epsilon)\theta^2\|X_t^d\|_{\mathscr{F}_0}^{2\alpha_0}\|X_t^d\|_{\mathscr{F}_1}^4\right)dt  + dM_t - \frac{\epsilon}{2}d\langle M \rangle_t \\
& \leq \frac{\|X_t^d\|_{\mathscr{F}_1}^2\|X_t^d\|_{\mathscr{F}_0}^{2\alpha_0}}{\left(C+\|X_t^d\|_{\mathscr{F}_1}^2\right)^2} \left( (\tilde{C}_1 + \theta^2) + (2C_1 - 2(1-\epsilon)\theta^2)\|X_t^d\|_{\mathscr{F}_1}^2\right) + dM_t - \frac{\epsilon}{2}d\langle M \rangle_t 
\end{aligned}
\end{equation}
and we choose $\theta$ and $\epsilon \in (0,1)$ such that $2C_1 - 2(1-\epsilon)\theta^2 < 0$. Also, $\alpha_0 =\gamma_{13}/2$.

\end{proof}

\begin{remark}
The previous proposition provides sufficient control conditions in order to pass to the limit in the stochastic term in the case of a compressible viscous model and in the case of an incompressible model, as presented in this paper. However, in order for the approximating stochastic integral to converge in the compressible inviscid model case too, we need uniform control also in $\mathcal{D}$. This is dictated by the special form of the diffusion term which now contains the $\mathscr{F}_1$ norm of the solution. \footnote{Note that the control for $\displaystyle\sup_{t\in[0,T]}\|X_t^d\|_{\mathscr{F}_0}$ and $\displaystyle\int_0^T\|X_t^d\|_{\mathscr{F}_1}$ still holds as before also in the compressible inviscid case, but these two conditions are not sufficient.} We prove that such a control holds $\mathbb{P}$ - a.s. in the following proposition.
\end{remark}

\begin{proposition}\label{prop:unifcontrolinD}
The approximating sequence $(X_t^d)_d$ is $\mathbb{P}$ - a.s. uniformly controlled in $\mathscr{D}$, that is, for any $\epsilon_2 >0$ there exists $\mathcal{K}_2 >0$ such that
\begin{equation}\label{eq:unifcontrolinD}
\displaystyle\sup_d \mathbb{P}\left( \displaystyle\sup_t \|X_t^d\|_{\mathscr{D}}^2 \geq \mathcal{K}_2\right) \leq \epsilon_2.
\end{equation}
\end{proposition}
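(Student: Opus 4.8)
The plan is to mirror the logarithmic It\^o argument from the proof of Proposition \ref{prop:unifcontrol}, now carried out for the $\mathscr{D}$-norm. This control is only needed, and its hypotheses are only available, in Case II, where $X_0\in\mathscr{D}$ and $B_t^d(X_t^d)=\theta\|X_t^d\|_{\mathscr{F}_1}^{\alpha_1}X_t^d$; in Cases I and III the initial datum need not lie in $\mathscr{D}$ and this bound is not used. Since $X^d$ takes values in the finite-dimensional space $\mathscr{D}^d$ and $A^d(X^d),B^d(X^d)\in\mathscr{D}^d$, It\^o's formula applies directly to $t\mapsto\|X_t^d\|_{\mathscr{D}}^2$ and yields
\begin{equation*}
d\|X_t^d\|_{\mathscr{D}}^2=\Big(2\langle X_t^d,A^d(X_t^d)\rangle_{\mathscr{D}}+\theta^2\|X_t^d\|_{\mathscr{F}_1}^{2\alpha_1}\|X_t^d\|_{\mathscr{D}}^2\Big)dt+2\langle X_t^d,B_t^d(X_t^d)\,dW_t\rangle_{\mathscr{D}}.
\end{equation*}
Fixing $C>0$ and $\epsilon\in(0,\tfrac12)$, I apply It\^o once more to $\log(C+\|X_t^d\|_{\mathscr{D}}^2)$ exactly as in \eqref{logeqn}, splitting off a fraction $\epsilon$ of the quadratic-variation correction; this produces a drift of the form $(C+\|X_t^d\|_{\mathscr{D}}^2)^{-2}\mathcal N_t$, a stochastic differential $dM_t$ with $M_t:=\int_0^t 2(C+\|X_s^d\|_{\mathscr{D}}^2)^{-1}\langle X_s^d,B_s^d(X_s^d)\rangle_{\mathscr{D}}\,dW_s$, and the term $-\tfrac{\epsilon}{2}d\langle M\rangle_t$, where the leading power of $\|X_t^d\|_{\mathscr{D}}$ in the numerator $\mathcal N_t$ is $(2\epsilon-1)\theta^2\|X_t^d\|_{\mathscr{F}_1}^{2\alpha_1}\|X_t^d\|_{\mathscr{D}}^4$.

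Next I would bound the drift. Using condition \eqref{cond:compressinviscid}, $\langle a,A^d(a)\rangle_{\mathscr{D}}\le C_1\|a\|_{\mathscr{F}_1}^{\gamma_1}\|a\|_{\mathscr{D}}^{\gamma_2}$, together with the interpolation $\|a\|_{\mathscr{F}_1}\le C\|a\|_{\mathscr{D}}\|a\|_{\mathscr{G}}$ of Assumption (A2) and the fact that $\|X_t^d\|_{\mathscr{G}}\le c\,\|X_t^d\|_{\mathscr{F}_0}$ is already controlled by Proposition \ref{prop:unifcontrol}, and choosing $\theta,\epsilon$ so that the highest power of $\|X_t^d\|_{\mathscr{D}}$ appearing in $\mathcal N_t$ carries a negative coefficient (possible since $\epsilon<\tfrac12$ and by the exponent relations in (A2)), one checks that $(C+\|X_t^d\|_{\mathscr{D}}^2)^{-2}\mathcal N_t$ has no growth in $\|X_t^d\|_{\mathscr{D}}$ and is bounded above by $c_1\big(1+\|X_t^d\|_{\mathscr{F}_1}^{2\alpha_1}\big)$ with $c_1$ independent of $d$. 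Hence
\begin{equation*}
d\log(C+\|X_t^d\|_{\mathscr{D}}^2)\le c_1\big(1+\|X_t^d\|_{\mathscr{F}_1}^{2\alpha_1}\big)dt+dM_t-\tfrac{\epsilon}{2}d\langle M\rangle_t.
\end{equation*}

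To conclude I integrate and pass to the uniform-in-$d$ bound. Since $X_0^d=T_dX_0$ with $X_0\in\mathscr{D}$, we have $\|X_0^d\|_{\mathscr{D}}\le\|X_0\|_{\mathscr{D}}$ for every $d$; by Proposition \ref{prop:unifcontrol} (and its proof in Case II) the random variable $Y_d:=\int_0^T\|X_s^d\|_{\mathscr{F}_1}^{2\alpha_1}\,ds$ is bounded in probability uniformly in $d$; and by Proposition \ref{revuzyor} the random variable $E_d(\epsilon):=\sup_{t\ge0}\big(M_t-\tfrac{\epsilon}{2}\langle M\rangle_t\big)$ is $\mathbb{P}$-a.s.\ finite with a tail that does not depend on $d$, being that of the supremum of a Brownian motion with strictly negative drift, time-changed by $\langle M\rangle$. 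Integrating the displayed inequality gives
\begin{equation*}
\sup_{t\in[0,T]}\|X_t^d\|_{\mathscr{D}}^2\le\exp\!\Big(\log(C+\|X_0\|_{\mathscr{D}}^2)+c_1T+c_1Y_d+E_d(\epsilon)\Big)-C,
\end{equation*}
and since the right-hand side is a continuous increasing function of $Y_d$ and $E_d(\epsilon)$, each bounded in probability uniformly in $d$, for every $\epsilon_2>0$ there is $\mathcal K_2>0$ with $\sup_d\mathbb{P}\big(\sup_t\|X_t^d\|_{\mathscr{D}}^2\ge\mathcal K_2\big)\le\epsilon_2$, which is \eqref{eq:unifcontrolinD}. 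The main obstacle is, as in the preceding propositions, that $M$ is only a continuous \emph{local} martingale --- its quadratic variation involves $\|X_s^d\|_{\mathscr{F}_1}^{2\alpha_1}\|X_s^d\|_{\mathscr{D}}^4$ and need not be integrable --- so no moment bound is available and every estimate must be carried out pathwise or in probability; the fix is precisely the estimate of Proposition \ref{revuzyor}, which supplies the $d$-independent tail for $E_d(\epsilon)$. The secondary technical point is verifying that the $\|X_t^d\|_{\mathscr{D}}$-growth in $\mathcal N_t$ is genuinely absorbed by the negative quadratic-variation term, using $\epsilon<\tfrac12$ and the matching of the exponents $\gamma_1,\gamma_2,\alpha_1$ in \eqref{cond:compressinviscid} with the interpolation inequalities of (A2), after which the residual drift is controlled solely through the $\mathscr{F}_1$-integrability already proved in Proposition \ref{prop:unifcontrol}.
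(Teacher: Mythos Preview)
Your proposal is correct and follows essentially the same route as the paper's proof: apply It\^o to a logarithm of $\|X_t^d\|_{\mathscr{D}}^2$, use condition \eqref{cond:compressinviscid} from Assumption (A2) in place of \eqref{cond:compressviscous}, absorb the $\mathscr{D}$-growth via the negative quadratic-variation term (using $\epsilon<\tfrac12$), and conclude with the Revuz--Yor tail estimate of Proposition \ref{revuzyor}. The paper's own proof is much terser---it simply points back to Proposition \ref{prop:unifcontrol} and highlights that the key subtlety is that \eqref{cond:compressinviscid} has the $\mathscr{D}$-norm on both sides, which is admissible precisely because $\mathscr{D}^d$ is finite-dimensional.

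Two minor differences are worth noting. First, the paper takes $\varphi(\|x\|_{\mathscr{D}})=\log(\|x\|_{\mathscr{D}}^2)$ without the additive constant $C$, which is how Case II of Proposition \ref{prop:unifcontrol} is organized and which exploits the no-zero-hitting hypothesis in (A2); your choice $\log(C+\|x\|_{\mathscr{D}}^2)$ mirrors Case I instead and is perfectly fine but produces the extra cross terms you have to bound. Second, you route the residual drift through the integral control $\int_0^T\|X_s^d\|_{\mathscr{F}_1}^{2\alpha_1}\,ds$ supplied by Proposition \ref{prop:unifcontrol}, whereas the paper's Case II template chooses $\theta$ so that the drift is outright nonpositive, avoiding that detour. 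Both variants reach \eqref{eq:unifcontrolinD} by the same mechanism.
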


\begin{proof}

The approximating equation 
\begin{equation}
dX_t^d = A_t^d(X_t^d)dt + B_t^d(X_t^d)dW_t^d \quad \hbox{in} 
\end{equation}
holds in $\mathscr{D}^d$ endowed with the scalar product inherited from $\mathscr{D}$.
By the It\^{o} formula 
\begin{equation}
\begin{aligned}
d\varphi(\|X_t^d\|_{\mathscr{D}}) &= \frac{\partial\varphi}{\partial x}(\|X_t^d\|_{\mathscr{D}})\tilde{A}^d(X_t^d)dt + \frac{\partial\varphi}{\partial x}\tilde{B}^d(\|X_t^d\|_{\mathscr{D}})dW_t^d + \frac{1}{2}\frac{\partial^2\varphi}{\partial x^2}(\|X_t^d\|_{\mathscr{D}})(\tilde{B}^d(X_t^d))^2dt
\end{aligned}
\end{equation}
in $(\mathscr{D}^d, \langle \cdot, \cdot \rangle_{\mathscr{D}})$, 
and we take
\begin{equation}
\varphi(\|x\|_{\mathscr{D}}):= \log(\|x\|_{\mathscr{D}}^2). 
\end{equation}

The uniform control holds by the same arguments as those used in the proof of Proposition \ref{prop:unifcontrol}, with the only difference that here we use condition \eqref{cond:compressinviscid}. There is, however, a subtlety in condition \eqref{cond:compressinviscid}: as opposed to conditions \eqref{cond:compressviscous} and \eqref{cond:incompress}, in \eqref{cond:compressinviscid} we do not require existence of a functional space which is of higher order compared to $\mathscr{D}$, on the right hand side. This is possible only in $\mathscr{D}$ and the justification hinges upon the fact that $\mathscr{D}^d$ is a finite-dimensional space (so all norms are equivalent), and the Fatou lemma.

\end{proof}

\subsection{Tightness}\label{sect:tightness}
\begin{proposition}\label{tightness}
The approximating sequence of solutions $(X_t^d)_d$ is tight in $C\left([0, T],\mathscr{G} \right)$.
\end{proposition}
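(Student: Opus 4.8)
The plan is to deduce tightness in $C([0,T],\mathscr{G})$ from an Arzel\`a--Ascoli/Aubin--Lions--Simon compactness criterion for pathspaces: any family of paths on which $\sup_{t\in[0,T]}\|f_t\|_{\mathscr{I}}\le R$ and $\|f\|_{W^{\alpha,p}([0,T];\mathscr{G})}\le R$ (for some $\alpha p>1$), where $\mathscr{I}$ denotes the space in which the supremum norm of the approximants is controlled ($\mathscr{F}_0$ in Cases I and III, $\mathscr{D}$ in Case II), is relatively compact in $C([0,T],\mathscr{G})$ --- pointwise relative compactness of the values comes from $\mathscr{I}\hookrightarrow\hookrightarrow\mathscr{G}$ (which holds by \eqref{embeddings}) and the uniform $\mathscr{I}$-bound, while equicontinuity comes from the fractional Sobolev bound in time. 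Because the a priori bounds of Propositions \ref{prop:unifcontrol} and \ref{prop:unifcontrolinD} hold only in probability, never in expectation, I would first localize: set
\[\tau_R^d:=\inf\Big\{t\in[0,T]:\ \|X_t^d\|_{\mathscr{I}}^2+\int_0^t\|X_s^d\|_{\mathscr{F}_1}^2\,ds>R\Big\}\wedge T,\]
so that Propositions \ref{prop:unifcontrol}--\ref{prop:unifcontrolinD} give $\sup_d\mathbb{P}(\tau_R^d<T)\to0$ as $R\to\infty$. It then suffices to show, for each fixed $R$, that the stopped processes $(X^d_{\cdot\wedge\tau_R^d})_d$ are tight in $C([0,T],\mathscr{G})$, and to patch: given $\eta>0$, pick $R$ with $\sup_d\mathbb{P}(\tau_R^d<T)<\eta/2$, then a compact $K_\eta\subset C([0,T],\mathscr{G})$ with $\sup_d\mathbb{P}(X^d_{\cdot\wedge\tau_R^d}\notin K_\eta)<\eta/2$; since $\{X^d=X^d_{\cdot\wedge\tau_R^d}\ \text{on}\ [0,T]\}\supseteq\{\tau_R^d=T\}$, we conclude $\sup_d\mathbb{P}(X^d\notin K_\eta)<\eta$.

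On $[0,\tau_R^d]$ every quantity entering the estimates is bounded by a constant depending only on $R$, so I would argue as in the classical $L^2(\Omega)$ setting. Write $X^d=X_0^d+D^d+M^d$ with $D^d_t=\int_0^{t\wedge\tau_R^d}A^d_s(X^d_s)\,ds$ and $M^d_t=\int_0^{t\wedge\tau_R^d}\theta\|X^d_s\|_{\mathscr{F}_i}^{\alpha_i}X^d_s\,dW^d_s$. The constant time-zero paths are tight in $\mathscr{G}$ since $X_0^d=T_dX_0$ is bounded in $\mathscr{F}_i\hookrightarrow\hookrightarrow\mathscr{G}$. For the martingale part, the diffusion integrand satisfies $\|\theta\|X^d_s\|_{\mathscr{F}_i}^{\alpha_i}X^d_s\|_{L_2(\mathscr{U},\mathscr{G})}\le C(R)$ on $[0,\tau_R^d]$ by the uniform controls of Propositions \ref{prop:unifcontrol}--\ref{prop:unifcontrolinD}, so a standard fractional Sobolev estimate for stochastic integrals (of Flandoli--Gatarek type) gives $\mathbb{E}\|M^d\|_{W^{\alpha,p}([0,T];\mathscr{G})}^p\le C(R,\alpha,p)$ for every $\alpha<\tfrac12$ and $p\ge2$; choosing $p>2$ and $1/p<\alpha<\tfrac12$ yields equicontinuity of the martingale parts in $L^p(\Omega)$, hence in probability. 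For the drift, \eqref{cond:tight1} together with the uniform controls gives, on $\{\tau_R^d=T\}$,
\[\|D_t^d-D_s^d\|_{\mathscr{G}}\le C(R)\,|t-s|+C_2\int_s^t\|X^d_u\|_{\mathscr{F}_1}^{\gamma^2}\,du.\]
In Case III the last term is handled through the uniform sup-bound on $\|X^d\|_{\mathscr{F}_1}$ (so in fact $\|A^d_u(X^d_u)\|_{\mathscr{G}}\le C(R)$); in Case II it is absent because $C_2=0$ in \eqref{cond:tight1}; in Case I one uses $\int_0^T\|X^d_u\|_{\mathscr{F}_1}^2\,du\le R$ and H\"older's inequality to bound it by $C(R)(t-s)^{1-\gamma^2/2}$ when $\gamma^2<2$. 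In all cases $D^d$ is equi-H\"older with some positive exponent on $\{\tau_R^d=T\}$, hence $D^d+M^d$ is $W^{\alpha,p}$-bounded in probability and the compactness criterion applies to $X^d_{\cdot\wedge\tau_R^d}$.

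The main obstacle is precisely the drift in Case I: there is no sup-bound on $\|X^d_t\|_{\mathscr{F}_1}$ and no moment bound, only $\int_0^T\|X^d_s\|_{\mathscr{F}_1}^2\,ds\le R$ on $\{\tau_R^d=T\}$, and this does not by itself make $\int_s^t\|X^d_u\|_{\mathscr{F}_1}^2\,du$ uniformly small in $|t-s|$. The way around this is structural: in the models covered by Case I the operator $A$ is controlled in the \emph{weak} norm $\mathscr{G}$ by at most the \emph{first} power of $\|\cdot\|_{\mathscr{F}_1}$ on balls of $\mathscr{F}_0$ --- the dissipative part maps $\mathscr{F}_1$ continuously into $\mathscr{G}$, and the nonlinearity is estimated by $\|\cdot\|_{\mathscr{F}_0}\|\cdot\|_{\mathscr{F}_1}$ --- so the effective exponent in \eqref{cond:tight1} for this estimate is $\gamma^2=1$, and Cauchy--Schwarz, $\int_s^t\|X^d_u\|_{\mathscr{F}_1}\,du\le(t-s)^{1/2}\big(\int_0^T\|X^d_u\|_{\mathscr{F}_1}^2\,du\big)^{1/2}$, produces a genuine $|t-s|^{1/2}$ modulus on $\{\tau_R^d=T\}$. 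A further point to respect is that, since $M^d$ is only a local martingale globally, the fractional Sobolev bound must be applied to the stopped process, where the integrand is bounded, and never to $M^d$ itself; the return to $X^d$ is exactly the localization step above. The remaining verifications (measurability of the compact sets, the identity $X^d=X^d_{\cdot\wedge\tau_R^d}$ on $\{\tau_R^d=T\}$, and passing from the stopped to the full process) are routine.
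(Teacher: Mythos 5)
Your argument is correct in substance but takes a genuinely different route from the paper. The paper verifies Aldous' criterion (Theorem \ref{aldouslemma}): pointwise tightness of the laws of $X^d_t$ in $\mathscr{G}$ follows from Proposition \ref{prop:unifcontrol} (and Proposition \ref{prop:unifcontrolinD} in Case II) together with the compact embedding $\mathscr{F}_0\hookrightarrow\mathscr{G}$, and the Aldous condition is checked directly in probability by splitting the increment over $[\tau_d,\tau_d+t]$ into drift plus stochastic integral: the drift is estimated via \eqref{cond:tight1} and the uniform controls, with the smallness supplied by the prefactor $\delta$ coming from Cauchy--Schwarz in time (so no modulus of continuity of the drift in $|t-s|$ is ever needed, only boundedness in probability of $\sup_t\|X^d_t\|_{\mathscr{F}_0}$ and $\int_0^T\|X^d_s\|^2_{\mathscr{F}_1}ds$), while the stochastic integral is handled by the Lenglart-type bound of Proposition \ref{gawareckimandrekar} with $n=\eta^2\sqrt{\delta}$, again without taking expectations. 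You instead localize with the stopping times $\tau^d_R$, get pathwise bounds on $[0,\tau^d_R]$, apply the Flandoli--Gatarek fractional Sobolev estimate in expectation to the stopped martingale part, use the $L^\infty([0,T];\mathscr{I})\cap W^{\alpha,p}([0,T];\mathscr{G})$ compactness criterion, and patch back via $\sup_d\mathbb{P}(\tau^d_R<T)\to0$; this patching is the correct counterpart of the paper's ``everything only in probability'' philosophy, and your route buys compact sets directly in $C([0,T],\mathscr{G})$ (the paper's criterion is formulated on the Skorokhod space) together with a quantitative H\"older modulus for the paths. The price is exactly the Case I issue you flag: you need a genuine time-modulus for the drift, hence sub-quadratic growth in the $\mathscr{F}_1$-norm in \eqref{cond:tight1}. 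Note, though, that the paper's own Case I computation, $\int_{\tau_d}^{\tau_d+t}\|A^d_s(X^d_s)\|^2_{\mathscr{G}}ds\leq C\bigl(\int\|X^d\|^{2\gamma}_{\mathscr{F}_0}+\int\|X^d\|^2_{\mathscr{F}_1}+1\bigr)$, implicitly assumes at most linear growth in $\|\cdot\|_{\mathscr{F}_1}$ (otherwise the uncontrolled quantity $\int\|X^d\|^{2\gamma^2}_{\mathscr{F}_1}$ with $2\gamma^2>2$ would appear), which matches the examples in Section \ref{sect:applications}; so your structural resolution (effective $\gamma^2\le1$, or $\gamma^2<2$ via H\"older) rests on the same implicit assumption the paper uses, and neither argument covers $\gamma^2=2$ in \eqref{cond:tight1} at the tightness step. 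Both proofs are otherwise complete and compatible with the absence of moment bounds.
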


\begin{proof}
In order to prove the tightness result, we use Theorem \ref{aldouslemma} and Proposition \ref{prop:unifcontrol}. We first show that for every $t\in \lbrack 0,T]$, the laws of $(X_t^d)_d$ are tight in $\mathscr{G}$. Then we prove that the sequence $(X^{d})_{d=1}^{\infty }$ satisfies the
Aldous condition \ref{cond1}. 

To prove tightness in $\mathscr{G}$ of the laws of $\left( X_t^{d}\right)_{d=1}^{\infty }$ for every $t\in \lbrack 0,T]$, it suffices to show that that for any $\epsilon >0,$ there exists a compact sets $K_{\epsilon }\in \mathscr{G}$
such that 
\begin{equation*}
\sup_{d\geq 1}\mathbb{P}\left(X_t^{d} \in \mathscr{G}\backslash K_{\epsilon }\right) \leq \epsilon.
\end{equation*}
Following from Proposition \ref{prop:unifcontrol}, we deduce that there
exists $\mathcal{K}_{\epsilon }$ such that 
\begin{equation*}
\sup_{d}\mathbb{P}\left( \Vert X_{t}^{d}\Vert _{\mathscr{F}_{0}}\geq 
\mathcal{K}_{\epsilon }\right) \leq \epsilon_{1}.
\end{equation*}%
Let $K_{\epsilon }=\left\{ x\in \mathscr{G}|\Vert x\Vert _{\mathscr{F}%
_{0}}\leq \mathcal{K}_{\epsilon }\right\} \subset \mathscr{G}$. Since $%
\mathscr{F}_{0}$ is compactly embedded in $\mathscr{G}$ it follows that the
set $K_{\epsilon }$ is compact in $\mathscr{G}$ and 
\begin{equation*}
\sup_{d\geq 1}\mathbb{P}\left(X_t^{d}\in \mathscr{G}\backslash K_{\epsilon }\right) \leq \sup_{d}\mathbb{P}\left( \Vert
X_{t}^{d}\Vert _{\mathscr{F}_{0}}\geq \mathcal{K}_{\epsilon }\right) \leq
\epsilon _{1}
\end{equation*}%
as required. 
We show below that the equivalent of condition \eqref{cond1} holds, with $\rho=\|\cdot \|_{\mathcal{G}}$.
Let $\tau^d$ be a positive stopping time. We need to check that for any $\epsilon >0,$ there exists $\delta >0$ such that for every sequence $(\tau_{d})_{d=1}^{\infty }$ of $\left( \mathcal{F}_{t}\right)
_{t\geq 0^{-}}$ - stopping times with $\tau_{d}\leq T$ 
\begin{equation*}
\sup_{d\geq 1}\sup_{0\leq t\leq \delta }\mathbb{P}\left(\left\vert
\left\vert X_{\tau_{d}+t}^{d} -X_{\tau_{d}}^{d}  \right\vert \right\vert_{\mathscr{G}}\geq \eta \right) \leq \epsilon .
\end{equation*}
We have
\begin{eqnarray*}
\Vert X_{\tau _{d}+t}^{d}-X_{\tau _{d}}^{d}\Vert _{\mathscr{G}}^{2} &\leq
&2\left\Vert \int_{\tau _{d}}^{\tau _{d}+t}A_{s}^{d}(X_{s}^{d})ds\right\Vert
_{\mathscr{G}}^{2}+2 \left\Vert \int_{\tau _{d}}^{\tau
_{d}+t}B_{s}^{d}(X_{s}^{d})dW_{s}\right\Vert _{\mathscr{G}}^{2} \\
&\leq & 2t \int_{\tau _{d}}^{\tau _{d}+t}\left\Vert
A_{s}^{d}(X_{s}^{d})\right\Vert _{\mathscr{G}}^{2}ds + 2\left\Vert \int_{\tau
_{d}}^{\tau _{d}+t}B_{s}^{d}(X_{s}^{d})dW_{s}\right\Vert _{\mathscr{G}}^{2}
\\
&\leq &2\delta \int_{\tau _{d}}^{\tau _{d}+t}\left\Vert
A_{s}^{d}(X_{s}^{d})\right\Vert _{\mathscr{G}}^{2}ds + 2 \left\Vert \int_{\tau
_{d}}^{\tau _{d}+t}B_{s}^{d}(X_{s}^{d})dW_{s}\right\Vert _{\mathscr{G}}^{2}
\end{eqnarray*}
We prove first that for any $\epsilon ,\eta >0,$ there exists $\delta >0$
such that 
\begin{equation*}
\sup_{d\geq 1}\sup_{0\leq t\leq \delta }\mathbb{P}\left( \delta \int_{\tau
_{d}}^{\tau _{d}+t}\left\Vert A_{s}^{d}(X_{s}^{d})\right\Vert _{\mathscr{G}}^{2}ds\geq \eta \right) \leq \epsilon .
\end{equation*}
We divide the argument into three cases:

\noindent\textbf{Case I: }We have that\textbf{\ } 
\begin{eqnarray*}
\int_{\tau _{d}}^{\tau _{d}+t}\left\Vert A_{s}^{d}(X_{s}^{d})\right\Vert _{%
\mathscr{G}}^{2}ds &\leq &C\left( \int_{\tau _{d}}^{\tau _{d}+t}\left\Vert
X_{s}^{d}\right\Vert _{\mathscr{F}_{0}}^{2\gamma }ds+C\int_{\tau _{d}}^{\tau
_{d}+t}\left\Vert X_{s}^{d}\right\Vert _{\mathscr{F}_{1}}^{2}ds+1\right)  \\
&\leq &C\left( T\sup_{t\in \left[ 0,T\right] }\left\Vert
X_{s}^{d}\right\Vert _{\mathscr{F}_{0}}^{2\gamma }+\int_{0}^{T}\left\Vert
X_{s}^{d}\right\Vert _{\mathscr{F}_{1}}^{2}ds+1\right) 
\end{eqnarray*}%
From the boundedness result, we choose $R>0$ such that 
\begin{eqnarray*}
\mathbb{P}\left(\sup_{s\in \lbrack 0,T]}\Vert X_{t}^{d}\Vert _{%
\mathscr{F}_{0}}\geq R \right)  &\leq &\frac{\epsilon }{2}. \\
\mathbb{P}\left( \int_{0}^{T}\left\Vert X_{s}^{d}\right\Vert _{%
\mathscr{F}_{1}}^{2}ds\geq R \right)  &\leq &\frac{\epsilon }{2}
\end{eqnarray*}
It follows that 
\begin{equation*}
\mathbb{P}\left(\left\Vert \int_{\tau _{d}}^{\tau
_{d}+t}A_{s}^{d}(X_{s}^{d})ds\right\Vert _{\mathscr{G}}^{2}\geq C\left(
TR^{2\gamma }+R+1\right) \right) \leq \epsilon 
\end{equation*}%
We choose  $\delta $ such that 
\begin{equation*}
\delta \left( C\left( TR^{2\gamma }+R+1\right) \right) =\eta \Leftrightarrow
\delta =\frac{\eta }{C\left( TR^{2\gamma }+R+1\right) }
\end{equation*}
which gives us
\begin{equation*}
\mathbb{P}\left(\delta \int_{\tau _{d}}^{\tau _{d}+t}\left\Vert
A_{s}^{d}(X_{s}^{d})\right\Vert _{\mathscr{G}}^{2}ds\geq \eta \right) =%
\mathbb{P}\left( \left\Vert \int_{\tau _{d}}^{\tau
_{d}+t}A_{s}^{d}(X_{s}^{d})ds\right\Vert _{\mathscr{G}}^{2}\geq C\left(
TR^{2\gamma }+R+1\right) \right) \leq \epsilon.
\end{equation*}

\noindent \textbf{Case II: }We have that\textbf{\ } 
\begin{equation*}
\int_{\tau _{d}}^{\tau _{d}+t}\left\Vert A_{s}^{d}(X_{s}^{d})\right\Vert _{%
\mathscr{G}}^{2}ds\leq C\left( T\sup_{t\in \left[ 0,T\right] }\left\Vert
X_{s}^{d}\right\Vert _{\mathscr{D}}^{2\gamma }+1\right) 
\end{equation*}%
and the argument follows similar to case \textbf{Case I}.

\noindent\textbf{Case III: }We have that\textbf{\ } 
\begin{equation*}
\int_{\tau _{d}}^{\tau _{d}+t}\left\Vert A_{s}^{d}(X_{s}^{d})\right\Vert _{%
\mathscr{G}}^{2}ds\leq C\left( T\sup_{t\in \left[ 0,T\right] }\left\Vert
X_{s}^{d}\right\Vert _{\mathscr{F}_{1}}^{2\gamma }+1\right) 
\end{equation*}%
and the argument follows similar to case \textbf{Case I}.

\noindent For the stochastic integral we use Proposition \ref{gawareckimandrekar}. 
Let 
\begin{equation}
M_{\tau_d, \tau_{d+t}} := \displaystyle\int_{\tau_d}^{\tau_d+t} B_s^d(X_s^d)dW_s \quad \langle M \rangle_{\tau_d,\tau_d+t} = \displaystyle\int_{\tau_d}^{\tau_d+t} (B_s^d(X_s^d))^2ds.
\end{equation}
We prove that for any $\epsilon >0,$ there exists $\delta >0$ such that for every sequence $\left(\tau_{d}\right)_{d=1}^{\infty}$ of $\left( \mathcal{F}%
_{t}\right)_{t\geq 0^{-}}$ - stopping times with $\tau_d\leq T$ 
\begin{equation}
\sup_{d\geq 1}\sup_{0\leq t\leq \delta }\mathbb{P}\left(\left\vert
\left\vert M_{\tau _{d},\tau _{d+t}}\right\vert \right\vert _{\mathscr{G}%
}\geq \eta \right) \leq \epsilon .  \label{claim2}
\end{equation}
We use here Proposition \ref{gawareckimandrekar} to deduce that  for any $%
\eta ,n>0$, 
\begin{equation*}
\mathbb{P}\left( \left\vert \left\vert M_{\tau _{d},\tau _{d+t}}\right\vert
\right\vert _{\mathscr{G}}\geq \eta \right) \leq \frac{n}{\eta ^{2}}+\mathbb{%
P}\left( \langle M\rangle _{\tau _{d},\tau _{d}+t}>n\right) \leq \frac{n}{%
\eta ^{2}}+\mathbb{P}\left( \delta \sup_{t\in \left[ 0,T\right] }\left\vert
\left\vert B_{t}^{d}(X_{t}^{d})\right\vert \right\vert _{\mathscr{G}%
}^{2}>n\right) 
\end{equation*}%
and if we choose $n=\eta ^{2}\sqrt{\delta }$ we get that 
\begin{equation*}
\mathbb{P}\left( \left\vert \left\vert M_{\tau _{d},\tau _{d+t}}\right\vert
\right\vert _{\mathscr{G}}\geq \eta \right) \leq \sqrt{\delta }+\mathbb{P}%
\left( \sup_{t\in \left[ 0,T\right] }\left\vert \left\vert
B_{t}^{d}(X_{t}^{d})\right\vert \right\vert _{\mathscr{G}}^{2}>\frac{\eta
^{2}}{\sqrt{\delta }}\right). 
\end{equation*}

\noindent\textbf{Case I and III:} Here we assume that $B_{t}(X_{t})=\theta \Vert
X_{t}\Vert _{\mathscr{F}_{0}}^{\alpha _{0}}X_{t}$. It follows that 
\begin{equation*}
\sup_{t\in \left[ 0,T\right] }\left\vert \left\vert
B_{s}^{d}(X_{s}^{d})\right\vert \right\vert _{\mathscr{G}}^{2}=\theta
^{2}\sup_{t\in \left[ 0,T\right] }\Vert X_{t}^d\Vert _{\mathscr{F}%
_{0}}^{2\alpha _{0}}\left\vert \left\vert X_{t}^{d}\right\vert
\right\vert _{\mathscr{G}}^{2}\leq \theta ^{2}\sup_{t\in \left[ 0,T\right]
}\Vert X_{t}\Vert _{\mathscr{F}_{0}}^{2\alpha _{0}+2}
\end{equation*}%
hence%
\begin{equation*}
\mathbb{P}\left( \left\vert \left\vert M_{\tau _{d},\tau _{d+t}}\right\vert
\right\vert _{\mathscr{G}}\geq \eta \right) \leq \sqrt{\delta }+\mathbb{P}%
\left( \sup_{t\in \left[ 0,T\right] }\Vert X_{t}\Vert _{\mathscr{F}%
_{0}}>\left( \frac{\eta ^{2}}{\sqrt{\delta }\frac{\eta ^{2}}{\sqrt{\delta }}}%
\right) ^{\frac{1}{2\alpha _{0}+2}}\right) 
\end{equation*}%
and the result follows as above, by choosing $\delta \leq \left( \frac{%
\epsilon }{2}\right) ^{2}$ \emph{and} \ such that 
\begin{equation*}
\mathbb{P}\left( \sup_{t\in \left[ 0,T\right] }\Vert X_{t}\Vert _{\mathscr{F}%
_{0}}^{2\alpha _{0}+2}>\left( \frac{\eta ^{2}}{\sqrt{\delta }\frac{\eta ^{2}%
}{\sqrt{\delta }}}\right) ^{\frac{1}{2\alpha _{0}+2}}\right) \leq \frac{%
\epsilon }{2}
\end{equation*}%
again by using Proposition \ref{prop:unifcontrol}. 

\noindent\textbf{Case II:} Here we assume that $B_{t}(X_{t})=\theta \Vert X_{t}\Vert
_{\mathscr{F}_{1}}^{\alpha _{1}}X_{t}$. It follows that 
\begin{equation*}
\sup_{t\in \left[ 0,T\right] }\left\vert \left\vert
B_{s}^{d}(X_{s}^{d})\right\vert \right\vert _{\mathscr{G}}^{2}=\theta
^{2}\sup_{t\in \left[ 0,T\right] }\Vert X_{t}\Vert _{\mathscr{F}%
_{1}}^{2\alpha _{1}}\left\vert \left\vert B_{t}^{d}(X_{t}^{d})\right\vert
\right\vert _{\mathscr{G}}^{2}\leq \theta ^{2}\sup_{t\in \left[ 0,T\right]
}\Vert X_{t}\Vert _{\mathscr{F}_{1}}^{2\alpha _{1}+2} 
\end{equation*}%
The analysis is then identical with that of \textbf{Case I and III }by applying 
Proposition \ref{prop:unifcontrolinD} instead of Proposition \ref%
{prop:unifcontrol}. The tightness of the sequence is now complete.
\end{proof}

\vspace{5mm}

\subsection{Applications to models from stochastic fluid dynamics}\label{sect:applications}

\subsubsection{Viscous compressible and incompressible models}
\textbf{Incompressible Navier Stokes equation in vorticity form} on the 3D torus ($\mathbb{T}^{3}$)  
\begin{equation}
d_{t}\omega +\mathcal{L}_{v}\omega =\Delta \omega   \label{eq_NS}
\end{equation}%
Here $\mathcal{L}_{v}\omega $ denotes the Lie derivative with respect to
the velocity field $v$. That is 
\begin{equation}
\mathcal{L}_{v}\omega :=(v\cdot \nabla )\omega -(\omega \cdot \nabla )v=:%
\left[ \,v\,,\,\omega \,\right] .  \label{eq Lie brkt}
\end{equation}%
The velocity field $v$ \ is obtain via the Biot-Savart operator which
reconstructs a zero mean divergence free vector field $u$ from a divergence
free vector field $\omega $ such that $\mathrm{curl}u=\omega $. On the torus
it is given by $u=-\mathrm{curl}\Delta ^{-1}\omega $.

Here $\mathscr{G}:=L^{2}\left( \mathbb{T}^{3};\mathbb{R}^{3}\right) $ is the space of square integrable zero mean divergence free
three dimensional vector fields, $\mathscr{F}_{0}:=W^{2,2}\left(\mathbb{T}^{3};\mathbb{R}^{3}\right) ,\mathscr{F}_{1}:=W^{3,2}\left( \mathbb{T}^{3};\mathbb{R}^{3}\right) ,$ $\mathscr{D}%
:=W^{4,2}\left( \mathbb{T}^{3};\mathbb{R}^{3}\right).$ In this
case, the operator $A$ is given by 
\begin{equation*}
A\left( \omega \right) :=-\left[ \,v \,,\,\omega \,%
\right] +\Delta \omega 
\end{equation*}%

In this case, one can check that for $\omega \in \mathscr{D}:=W^{4,2}\left( \mathbb{T}^{3};\mathbb{R}^{3}\right)$ there exists $\alpha
,\beta >0$ such that\  
\begin{equation*}
\left\langle \omega ,A\left( \omega \right) \right\rangle _{W^{2,2}\left( \mathbb{T}^{3};\mathbb{R}^{3}\right) }\leq \alpha \left\vert
\left\vert \omega \right\vert \right\vert _{W^{2,2}\left( \mathbb{T}^{3};\mathbb{R}^{3}\right) }^{6}-\beta \left\vert \left\vert \omega
\right\vert \right\vert _{W^{3,2}\left( \mathbb{T}^{3};\mathbb{R}^{3}\right)}^{2}
\end{equation*}%
Moreover%
\begin{equation*}
\left\vert \left\vert A\left( \omega \right) \right\vert \right\vert
_{L^{2}\left( \mathbb{T}^{3};\mathbb{R}^{3}\right) }\leq c\left(
\left\vert \left\vert \omega \right\vert \right\vert _{W^{2,2}\left( \mathbb{T}^{3};\mathbb{R}^{3}\right) }^{\frac{3}{2}%
}+\left\vert \left\vert \omega \right\vert \right\vert _{W^{2,2}\left( \mathbb{T}^{3};\mathbb{R}^{3}\right) }\right) 
\end{equation*}%
which implies that $A$ is bounded on the compact sets $K_{\epsilon }=\left\{
\Vert \omega \Vert _{W^{2,2}\left( \mathbb{T}^{3};\mathbb{R}%
^{3}\right) }\leq R\right\} \in L_{\sigma }^{2}\left( \mathbb{T}^{3};\mathbb{%
R}^{3}\right) $(this is required for tightness).\ Also the interpolation
property holds for $L^{2}\left( \mathbb{T}^{3};\mathbb{R}%
^{3}\right) ,$ $W^{2,2}\left( \mathbb{T}^{3};\mathbb{R}^{3}\right) 
$ and $W^{3,2}\left( \mathbb{T}^{3};\mathbb{R}^{3}\right) $.

\vspace{3mm}
\noindent\textbf{Viscous Rotating Shallow Water Model (2D)}\\
Let $u=(u^{1},u^{2})$ be the horizontal fluid velocity of the shallow water model, defined on the two-dimensional torus $\mathbb{T}^{2}$ and $h$ be the
thickness of the fluid column. Let also $v:=\epsilon u+\mathcal{R}$, with $%
curl\ \mathcal{R}=f\hat{z}$. $\mathcal{R}$ corresponds to the vector
potential for the (divergence-free) rotation rate about the vertical
direction, and it is chosen here such that $\nabla \mathcal{R}=0$. We will
denote by $a:=\left( v,h\right) $ the solution of the viscous rotating shallow water (RSW) system 
\begin{equation*}
{\mathrm{d}}_{\mathrm{t}}a_{t}=F\left( a_{t}\right) ,
\end{equation*}%
where $F\left( a_{t}\right) $ denotes 
\begin{equation*}
F\left( 
\begin{array}{c}
v \\ 
h%
\end{array}%
\right) =\left( 
\begin{array}{c}
-u\cdot \nabla v-f\hat{z}\times u-\nabla p+\gamma \Delta v_{t} \\ 
-\nabla \cdot (hu)+\eta \Delta h_{t}%
\end{array}%
\right)
\end{equation*}%
and
\begin{itemize}
\item $\epsilon $ is the Rossby number, a dimensionless number which
describes the effects of rotation on the fluid flow: a small Rossby number ($%
\epsilon <<1$) suggests that the rotation dominates over the advective
terms; it can be expressed as $\epsilon =\frac{U}{fL}$ where $U$ is a
typical scale for horizontal speed and $L$ is a typical length scale.

\item $f$ is the Coriolis parameter, $f=2\Theta \sin \varphi $ where $\Theta 
$ is the rotation rate of the Earth and $\varphi $ is the latitude; $f\hat{z}%
\times u=(-fu^{2},fu^{1})$, where $\hat{z}$ is a unit vector pointing away
form the centre of the Earth. For the analytical analysis we assume $f$ to
be constant.

\item $p:=\frac{h-b}{\epsilon \mathscr{F}_0}$, $\nabla p$ is the pressure
term, $b$ is the bottom topography function.

\item $\mathscr{F}$ is the Froude number, a dimensionless number which
relates to the stratification of the flow. It can be expressed as $%
\mathscr{F}=\frac{U}{NH}$ where $H$ is the typical vertical scale and $N$ is
the buoyancy frequency.
\item $\gamma, \eta$ are viscosity coefficients.
\end{itemize}

\bigskip Here $\mathscr{G}:=L^{2}\left( \mathbb{T}^{2};\mathbb{R}^{2}\right) 
$ is the space of square integrable two dimensional vector fields, $%
\mathscr{F}_{0}:=W^{1,2}\left( \mathbb{T}^{2};\mathbb{R}^{2}\right) ,%
\mathscr{F}_{1}:=W^{2,2}\left( \mathbb{T}^{2};\mathbb{R}^{2}\right) ,$ $%
\mathscr{D}:=W^{3,2}\left( \mathbb{T}^{2};\mathbb{R}^{2}\right).$ Then one
can show that %(see formula 25 in the paper or maybe we do it by hand).%
\begin{equation}
\left\langle a,F\left( a\right) \right\rangle _{W^{1,2}\left( \mathbb{T}^{2};%
\mathbb{R}^{2}\right) }\leq C\left\vert \left\vert a\right\vert \right\vert
_{W^{1,2}\left( \mathbb{T}^{2};\mathbb{R}^{2}\right) }^{6}-\epsilon
\left\vert \left\vert a\right\vert \right\vert _{W^{2,2}\left( \mathbb{T}%
^{2};\mathbb{R}^{2}\right)}^{2}.  \label{term1}
\end{equation}%
Moreover%
\begin{equation*}
\left\vert \left\vert F\left( a\right) \right\vert \right\vert _{L^{2}\left( 
\mathbb{T}^{2};\mathbb{R}^{2}\right) }\leq c\left( \left\vert \left\vert
a\right\vert \right\vert _{W^{1,2}\left( \mathbb{T}^{2};\mathbb{R}%
^{2}\right) }^{2}+\left\vert \left\vert a\right\vert \right\vert
_{W^{2,2}\left( \mathbb{T}^{2};\mathbb{R}^{2}\right) }+1\right).
\end{equation*}%

\subsubsection{Inviscid compressible models}
\noindent\textbf{Burgers' equation} on the 2D torus ($\mathbb{T}^2$)
\begin{subequations}
\begin{equation}
d_{t}u+u\cdot \nabla u=0  \label{2Dburgers}
\end{equation}%
In this case, the operator $A$ is given by 
\end{subequations}
\begin{equation*}
A\left( u\right) :=-u\cdot \nabla u
\end{equation*}
and $\mathscr{G}:=L^{2}\left( \mathbb{T}^{3};\mathbb{R}^{3}\right)$,  $\mathscr{F}_{0}:=W^{1,2}\left( \mathbb{T}^{2};\mathbb{R}^{2}\right),\mathscr{F}_{1}:=W^{3,2}\left( \mathbb{T}^{2};\mathbb{R}^{2}\right)$ $\mathscr{D}:=W^{4,2}\left( \mathbb{T}^{2};\mathbb{R}^{2}\right).$
We have
\begin{eqnarray*}
\left\langle u,A\left( u\right) \right\rangle _{W^{1,2}\left( 
\mathbb{T}^{2};\mathbb{R}^{2}\right) } &\leq & C \left\vert \left\vert
u\right\vert \right\vert _{W^{3,2}\left( \mathbb{T}^{2};\mathbb{R}%
^{2}\right) }\left\vert \left\vert u\right\vert \right\vert _{W^{1,2}\left( \mathbb{T}^{2};\mathbb{R}^{2}\right) }^{2} \\
\left\vert \left\vert A\left( u\right) \right\vert \right\vert _{L^{2}\left( \mathbb{T}^{2};\mathbb{R}^{2}\right) } &\leq &\left\vert
\left\vert u\right\vert \right\vert _{W^{1,2}\left( \mathbb{T}^{2};%
\mathbb{R}^{2}\right)}^{2} \\
\left\langle u,A\left( u\right) \right\rangle _{W^{4,2}\left( 
\mathbb{T}^{2};\mathbb{R}^{2}\right) } &\leq &\left\vert \left\vert
u\right\vert \right\vert _{W^{3,2}\left( \mathbb{T}^{2};\mathbb{R}%
^{2}\right) }\left\vert \left\vert u\right\vert \right\vert _{W^{4,2}\left(\mathbb{T}^{2};\mathbb{R}^{2}\right)}^{2}.
\end{eqnarray*}

Note that \eqref{cond:compressviscous} can be obtained from the first inequality by a direct application of Young's inequality.

\noindent\textbf{Inviscid Rotating Shallow Water Model (2D)}

\noindent The rotating shallow water model (RSW) is as above, except the fact that there is no viscosity, that is $F(a_t)$ is replaced by $\bar{F}(a_t)$ and we have
\begin{equation*}
{\mathrm{d}}_{\mathrm{t}}a_{t}=\bar{F}\left( a_{t}\right) ,
\end{equation*}%
where
\begin{equation*}
\bar{F}:=\left( 
\begin{array}{c}
v \\ 
h%
\end{array}%
\right) =\left( 
\begin{array}{c}
-u\cdot \nabla v-f\hat{z}\times u-\nabla p \\ 
-\nabla \cdot (hu)
\end{array}%
\right)
\end{equation*}%
and $\mathscr{G}:=L^{2}\left( \mathbb{T}^{2};\mathbb{R}^{2}\right)$,  $\mathscr{F}_{0}:=W^{1,2}\left( \mathbb{T}^{2};\mathbb{R}^{2}\right),\mathscr{F}_{1}:=W^{3,2}\left( \mathbb{T}^{2};\mathbb{R}^{2}\right)$ $\mathscr{D}:=W^{4,2}\left( \mathbb{T}^{2};\mathbb{R}^{2}\right)$ with similar inequalities as above.

\subsubsection{Inviscid incompressible models}
\noindent \textbf{Incompressible Euler equation in vorticity form} on
the 3D torus

\begin{equation}
d_{t}\omega +\mathcal{L}_{v}\omega =0  \label{eq Euler}
\end{equation}%
Here $\mathscr{G}:=L^{2}\left( \mathbb{T}^{3};\mathbb{R}%
^{3}\right) $,  $\mathscr{F}_{0}:=W^{\frac{3}{2}+\epsilon
,2}\left( \mathbb{T}^{3};\mathbb{R}^{3}\right) ,\mathscr{F}_{1}:=W^{3,2}\left( \mathbb{T}^{3};\mathbb{R}^{3}\right),$ $\mathscr{D}
:=W^{4,2}\left( \mathbb{T}^{3};\mathbb{R}^{3}\right).$ 
In this case, one can check that for $\omega \in \mathscr{D}:=W^{4,2}\left(\mathbb{T}^{3};\mathbb{R}^{3}\right) $ 
\begin{equation*}
\left\langle \omega ,A\left( \omega \right) \right\rangle _{W^{2,2}\left( \mathbb{T}^{3};\mathbb{R}^{3}\right) }\leq \alpha \left\vert
\left\vert \omega \right\vert \right\vert _{W^{\frac{3}{2}%
+\epsilon ,2}\left( \mathbb{T}^{3};\mathbb{R}^{3}\right) }\left\vert
\left\vert \omega \right\vert \right\vert _{W^{2,2}\left( \mathbb{T%
}^{3};\mathbb{R}^{3}\right) }^{2}
\end{equation*}%
Moreover%
\begin{equation*}
\left\vert \left\vert A\left( \omega \right) \right\vert \right\vert
_{L^{2}\left( \mathbb{T}^{3};\mathbb{R}^{3}\right) }\leq
c\left\vert \left\vert \omega \right\vert \right\vert _{W^{2,2}\left( \mathbb{T}^{3};\mathbb{R}^{3}\right) }^{\frac{3}{2}}
\end{equation*}%
which implies that $A$ is bounded on compact sets $K_{\epsilon }=\left\{
\Vert \omega \Vert _{W^{2,2}\left( \mathbb{T}^{3};\mathbb{R}%
^{3}\right) }\leq R\right\} \in L^{2}\left( \mathbb{T}^{3};\mathbb{%
R}^{3}\right) $. Also the interpolation property holds for $L^{2}\left( \mathbb{T}^{3};\mathbb{R}^{3}\right) ,$ $W^{\frac{3}{2}%
+\epsilon,2}\left( \mathbb{T}^{3};\mathbb{R}^{3}\right) $ and $W^{2,2}\left( \mathbb{T}^{3};\mathbb{R}^{3}\right) $. 

\vspace{5mm}
In general, global existence for the type of models presented in this section can be shown also in smoother spaces of the form $W^{k,2}(\mathbb{T}^2,\mathbb{R}^2)$ with $k\geq 2$ (see e.g. \cite{LangCrisan}, \cite{LangCrisanMemin}). This is useful for numerical purposes.

\subsection{Appendix}
\noindent\textbf{Aldous' criterion for tightness}

\noindent Let $(\mathbb{S}, \rho)$ be a separable and complete metric space. Let $\mathcal{D}(0, T ; \mathbb{S})$ denote the set of $\mathbb{S}$-valued càdlàg functions defined on $[0, T]$.
\begin{definition}
A sequence $\left\{X_n\right\}_{n=1}^{\infty}$ of  $\mathcal{D}(0, T ; \mathbb{S})$-valued random variables is said to satisfy the Aldous condition if and only if $\forall \epsilon>0, \forall \eta>0, \exists \delta>0$ such that for every sequence $\left\{\tau_n\right\}_{n=1}^{\infty}$ of $\left(\mathcal{F}_t\right)_{t \geq 0^{-}}$ - stopping times with $\tau_n \leq T$ we have
\begin{equation}\label{cond1}
\sup _{n \geq 1} \sup _{0 \leq t \leq \delta} \mathbb{P}\left\{\rho\left(X_n\left(\tau_n+t\right), X_n\left(\tau_n\right)\right) \geq \eta\right\} \leq \epsilon .
\end{equation}
\end{definition}
We can easily formulate a sufficient condition for \eqref{cond1} using Markov's inequality. Suppose that there exist constants $\alpha, \beta, C>0$ such that for every sequence $\left\{\tau_n\right\}_{n=1}^{\infty}$ of $\mathcal{F}_t$ - stopping times with $\tau_n \leq T$ we have
\begin{equation}\label{cond2}
\sup _{n \geq 1} \mathbb{E}\left(\rho\left(X_n\left(\tau_n+t\right), X_n\left(\tau_n\right)\right)^\alpha\right) \leq C t^\beta
\end{equation}
for every $t \geq 0$. Then $\left\{X_n\right\}_{n=1}^{\infty}$ satisfies the Aldous condition \eqref{cond1}. 

A sufficient condition for tightness of probability measures on $\mathcal{D}(0, T ; \mathbb{S})$: 

\begin{theorem}\label{aldouslemma}
Let $(\mathbb{S}, \rho)$ be a complete, separable metric space and let $\left\{X_n\right\}_{n=1}^{\infty}$ be a sequence of $\mathcal{D}(0, T ; \mathbb{S})$-valued random variables. If for every $t \in[0, T]$ the laws of $\left\{X_n(t)\right\}_{n=1}^{\infty}$ are tight on $\mathbb{S}$ and if condition \eqref{cond2} holds, then the laws of $\left\{X_n\right\}_{n=1}^{\infty}$ are tight on $\mathcal{D}(0, T ; \mathbb{S})$.
\end{theorem}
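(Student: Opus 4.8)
The plan is to split the statement into the two ingredients it really combines: the standard characterisation of tightness on the Skorokhod space $\mathcal{D}(0,T;\mathbb{S})$, and Aldous' lemma, which converts a control on increments at stopping times into a control on the modulus of continuity. First I would recall that a family of laws on $\mathcal{D}(0,T;\mathbb{S})$ (with the $J_1$-Skorokhod topology) is tight if and only if: (a) for every $t$ in a dense subset of $[0,T]$ together with $t=T$, the laws of $X_n(t)$ are tight on $\mathbb{S}$; and (b) for every $\eta>0$,
\begin{equation*}
\lim_{\delta\downarrow 0}\ \sup_{n\ge 1}\ \mathbb{P}\!\left(w'_{[0,T]}(X_n,\delta)\ge\eta\right)=0,
\end{equation*}
where $w'_{[0,T]}(x,\delta)$ is the càdlàg modulus of continuity, i.e.\ the infimum over all partitions $0=t_0<t_1<\dots<t_r=T$ with $\min_i(t_i-t_{i-1})>\delta$ of the quantity $\max_i\ \sup_{s,u\in[t_{i-1},t_i)}\rho(x(s),x(u))$. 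The hypothesis supplies (a) in the strong form of holding for every $t\in[0,T]$, so the whole task is (b). Moreover Markov's inequality applied to \eqref{cond2} gives, for any stopping time $\tau_n\le T$ and any deterministic $t\le\delta$, that $\mathbb{P}(\rho(X_n(\tau_n+t),X_n(\tau_n))\ge\eta)\le\eta^{-\alpha}\mathbb{E}[\rho(X_n(\tau_n+t),X_n(\tau_n))^{\alpha}]\le\eta^{-\alpha}C\delta^{\beta}$, which is $\le\epsilon$ once $\delta$ is small; hence \eqref{cond2} implies the Aldous condition \eqref{cond1}, with which I would work from now on.

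\noindent\textbf{From the Aldous condition to the modulus bound.} Fix $\eta,\epsilon>0$. For each $n$ I would introduce the $(\mathcal{F}_t)$-stopping times $\sigma_0^n:=0$ and, recursively, $\sigma_{k+1}^n:=\inf\{t>\sigma_k^n:\rho(X_n(t),X_n(\sigma_k^n))\ge\eta\}\wedge T$. Since the paths are càdlàg, for $\mathbb{P}$-a.e.\ $\omega$ the sequence $(\sigma_k^n)_k$ increases strictly until it reaches $T$, so that $N_n:=\#\{k\ge 0:\sigma_k^n<T\}$ is finite, and on each block $[\sigma_k^n,\sigma_{k+1}^n)$ the oscillation of $X_n$ is at most $2\eta$ by the triangle inequality. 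Hence, whenever $N_n\le M$ and every gap $\sigma_{k+1}^n-\sigma_k^n$ with $\sigma_k^n<T$ exceeds $\delta$, the partition generated by the $\sigma_k^n$ (merged near $T$ if the last block is too short) is admissible in the definition of $w'$ and witnesses $w'_{[0,T]}(X_n,\delta)\le 3\eta$. Therefore
\begin{equation*}
\mathbb{P}\!\left(w'_{[0,T]}(X_n,\delta)> 3\eta\right)\ \le\ \mathbb{P}(N_n> M)\ +\ \sum_{k=0}^{M}\mathbb{P}\!\left(\sigma_k^n<T,\ \sigma_{k+1}^n-\sigma_k^n\le\delta\right),
\end{equation*}
and each summand is bounded, uniformly in $n$ and $k$, by \eqref{cond1} applied with $\tau_n=\sigma_k^n$ (using right-continuity to extract a deterministic increment $t\le\delta$ at which $\rho(X_n(\sigma_k^n+t),X_n(\sigma_k^n))$ is already of order $\eta$). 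It then remains to choose $M$ so large that $\sup_n\mathbb{P}(N_n>M)\le\epsilon/2$ and then $\delta$ so small that the finite sum is $\le\epsilon/2$; this yields (b) and hence tightness of $\{X_n\}$ on $\mathcal{D}(0,T;\mathbb{S})$.

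\noindent\textbf{Main obstacle.} The delicate step is the last one, and within it the uniform-in-$n$ bound $\sup_n\mathbb{P}(N_n(\eta)>M)\to 0$ as $M\to\infty$ on the number of $\eta$-oscillations: this is precisely where the full strength of the Aldous condition — uniformity over \emph{all} sequences of stopping times — is used, together with a pigeonhole argument forcing two consecutive oscillation times to lie within $T/M$ of each other. The secondary nuisance is the càdlàg/endpoint bookkeeping in handling a possible jump at $\sigma_{k+1}^n$ and the last block before $T$; both points are classical (Aldous, \emph{Stopping times and tightness}, 1978; see also Joffe--M\'etivier and Jakubowski). I would finally note that in the application of this appendix the approximating processes $X^d$ have continuous paths, so $w'$ may be replaced by the ordinary modulus of continuity and these jump subtleties disappear, leaving only the oscillation-counting estimate.
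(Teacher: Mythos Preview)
The paper does not prove this theorem: it is stated without proof in the Appendix as a classical result (Aldous' tightness criterion), used as a black box in the proof of Proposition~\ref{tightness}. So there is no ``paper's own proof'' to compare against.

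Your outline is the standard Aldous argument and is correct in its architecture: reduce to the $w'$-modulus criterion for tightness on $\mathcal{D}(0,T;\mathbb{S})$, pass from \eqref{cond2} to \eqref{cond1} via Markov, and then control the modulus by introducing the successive $\eta$-oscillation stopping times $\sigma_k^n$. You also correctly flag the genuine crux---the uniform-in-$n$ bound on the number $N_n$ of $\eta$-oscillations via pigeonhole---and cite the right sources. One point to make cleaner in a full write-up: the passage from \eqref{cond1}, which involves a \emph{deterministic} increment $t$, to the bound on $\mathbb{P}(\sigma_{k+1}^n-\sigma_k^n\le\delta)$, which concerns a \emph{random} increment, needs a short argument (discretisation of the increment together with right-continuity, or the two-stopping-time reformulation); your phrase ``using right-continuity to extract a deterministic increment'' gestures at this but would benefit from one more line. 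For the application in this paper the processes $X^d$ have continuous paths, so, as you note, the jump/endpoint bookkeeping is immaterial.
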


\begin{proposition}[\cite{RevuzYor}]\label{revuzyor}
If $Y$ is a continuous local martingale which vanishes at $0$, then
\begin{equation}
\mathbb{P}\left( \displaystyle\sup_{t}{Y}_t \geq x, \quad \langle Y\rangle_t \leq y \right) \leq \frac{1}{e^{-x^2/2y}}.
\end{equation}
\end{proposition}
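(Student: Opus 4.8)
The plan is to reduce the statement to the classical estimate obtained from the exponential supermartingale attached to $Y$. For each $\lambda>0$ set
\[
Z_t^{\lambda}:=\exp\!\Big(\lambda Y_t-\tfrac{\lambda^2}{2}\langle Y\rangle_t\Big),\qquad t\ge 0 .
\]
Since $Y$ is a continuous local martingale with $Y_0=0$, It\^{o}'s formula shows that $Z^{\lambda}$ solves $dZ^{\lambda}_t=\lambda Z^{\lambda}_t\,dY_t$, so it is a nonnegative continuous local martingale with $Z_0^{\lambda}=1$; being nonnegative it is automatically a supermartingale, whence $\mathbb{E}[Z^{\lambda}_{S}]\le 1$ for every bounded stopping time $S$. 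This is the only structural fact about $Y$ that will be used.

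Next I would localise at the first passage time of $Y$ above level $x$, namely $T:=\inf\{t\ge 0:\ Y_t\ge x\}$. On the event $E:=\{\sup_{t}Y_t\ge x,\ \langle Y\rangle_{\infty}\le y\}$ one has $T<\infty$, $Y_T=x$ by path continuity, and $\langle Y\rangle_T\le\langle Y\rangle_{\infty}\le y$, hence
\[
Z_T^{\lambda}\ \ge\ \exp\!\Big(\lambda x-\tfrac{\lambda^2}{2}y\Big)\qquad\text{on }E .
\]
Applying the supermartingale property at the bounded times $T\wedge n$ gives $\mathbb{E}[Z^{\lambda}_{T\wedge n}]\le 1$; since $Z^{\lambda}\ge 0$ and $Z^{\lambda}_{T\wedge n}\to Z^{\lambda}_{T}\mathbf 1_{\{T<\infty\}}$ pointwise, Fatou's lemma yields $\mathbb{E}\big[Z^{\lambda}_{T}\mathbf 1_{\{T<\infty\}}\big]\le 1$. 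Combining this with the lower bound on $E$,
\[
\exp\!\Big(\lambda x-\tfrac{\lambda^2}{2}y\Big)\,\mathbb{P}(E)\ \le\ \mathbb{E}\big[Z^{\lambda}_{T}\mathbf 1_{E}\big]\ \le\ \mathbb{E}\big[Z^{\lambda}_{T}\mathbf 1_{\{T<\infty\}}\big]\ \le\ 1 ,
\]
so $\mathbb{P}(E)\le\exp\!\big(-\lambda x+\tfrac{\lambda^2}{2}y\big)$ for every $\lambda>0$. Optimising the exponent (it is minimised at $\lambda=x/y$) gives $\mathbb{P}(E)\le e^{-x^2/(2y)}$, which is the assertion (the content of Exercise IV.3.16 in \cite{RevuzYor}); the case $y=0$ is trivial.

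The only genuinely delicate point is the passage to the limit $n\to\infty$ in the stopped process $Z^{\lambda}_{T\wedge n}$: this is needed precisely because $Z^{\lambda}$ is a priori only a \emph{local} martingale and $T$ is in general unbounded, so one cannot simply invoke optional stopping at $T$. Nonnegativity of $Z^{\lambda}$ is exactly what makes Fatou's lemma available here, and this is the step I expect to require the most care in a fully written proof; everything else is the standard Chernoff-type optimisation over $\lambda$.
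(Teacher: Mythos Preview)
Your proof is correct and is exactly the standard argument via the stochastic exponential supermartingale; this is precisely the content of Exercise IV.3.16 in Revuz--Yor. The paper does not supply its own proof of this proposition: it is stated in the Appendix purely as a citation from \cite{RevuzYor}, so there is nothing to compare against. As an aside, the displayed bound in the paper carries a typo: $\frac{1}{e^{-x^2/2y}}=e^{x^2/2y}\ge 1$ would be vacuous; the intended right-hand side is $e^{-x^2/(2y)}$, which is exactly what you establish.
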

%\OL{To check if Exercises 1.40, 1.41 page 134 are also helpful here}

\begin{proposition}[Lemma 2.5 in \cite{GawareckiMandrekar}]\label{gawareckimandrekar}
Let $K, H$ be separable Hilbert spaces, $Q$ a trace-class operator on $K$ and $\Phi \in L_2\left(K_Q, H\right)$. Then for arbitrary $\delta>0$ and $n>0$,
\begin{equation}
\mathbb{P}\left(\sup _{t \leq T}\left\|\int_0^t \Phi(s) d W_S\right\|_H>\delta\right) \leq \frac{n}{\delta^2}+\mathbb{P}\left(\int_0^T\|\Phi(s)\|_{\Lambda_2\left(K_Q, H\right)}^2 d s>n\right).
\end{equation}
\end{proposition}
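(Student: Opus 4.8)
The plan is to run the classical localisation-plus-Doob argument. Since $\Phi$ is only assumed to be pathwise square integrable, the process $\int_0^{\cdot}\Phi\,dW$ is a priori only a continuous local martingale, so the first move is to truncate the integrand at the first time the quadratic-variation clock reaches $n$, which produces a genuine square-integrable martingale; Doob's maximal inequality then delivers the sharp constant $n/\delta^2$, while the event on which the truncation is active is precisely the event in the second term on the right-hand side.

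Concretely, write $M_t:=\int_0^t\Phi(s)\,dW_s$ and introduce
\[
\tau_n:=\inf\Big\{t\ge 0:\ \int_0^t\|\Phi(s)\|_{\Lambda_2(K_Q,H)}^2\,ds>n\Big\},
\]
with the convention $\inf\emptyset=+\infty$. Because $t\mapsto\int_0^t\|\Phi(s)\|_{\Lambda_2(K_Q,H)}^2\,ds$ is continuous and non-decreasing, one checks that $\{\tau_n<T\}\subseteq\{\int_0^T\|\Phi(s)\|_{\Lambda_2(K_Q,H)}^2\,ds>n\}$, while on $\{\tau_n\ge T\}$ one has $\int_0^{T\wedge\tau_n}\|\Phi(s)\|_{\Lambda_2(K_Q,H)}^2\,ds\le n$. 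I would then consider the stopped process $N_t:=M_{t\wedge\tau_n}=\int_0^t\mathbf{1}_{\{s\le\tau_n\}}\Phi(s)\,dW_s$. By construction $\mathbb{E}\int_0^T\mathbf{1}_{\{s\le\tau_n\}}\|\Phi(s)\|_{\Lambda_2(K_Q,H)}^2\,ds\le n<\infty$, so the It\^{o} isometry for the $H$-valued stochastic integral shows that $N$ is a genuine square-integrable continuous $H$-valued martingale with $\mathbb{E}\|N_T\|_H^2=\mathbb{E}\int_0^{T\wedge\tau_n}\|\Phi(s)\|_{\Lambda_2(K_Q,H)}^2\,ds\le n$.

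Since $x\mapsto\|x\|_H^2$ is convex, $\|N_t\|_H^2$ is a non-negative submartingale, and Doob's maximal inequality in its weak form yields
\[
\mathbb{P}\Big(\sup_{t\le T}\|N_t\|_H>\delta\Big)\le\mathbb{P}\Big(\sup_{t\le T}\|N_t\|_H^2\ge\delta^2\Big)\le\frac{\mathbb{E}\|N_T\|_H^2}{\delta^2}\le\frac{n}{\delta^2}.
\]
On $\{\tau_n\ge T\}$ we have $M_t=N_t$ for every $t\le T$, so splitting according to whether $\tau_n<T$ gives
\[
\mathbb{P}\Big(\sup_{t\le T}\|M_t\|_H>\delta\Big)\le\mathbb{P}\Big(\sup_{t\le T}\|N_t\|_H>\delta\Big)+\mathbb{P}(\tau_n<T)\le\frac{n}{\delta^2}+\mathbb{P}\Big(\int_0^T\|\Phi(s)\|_{\Lambda_2(K_Q,H)}^2\,ds>n\Big),
\]
which is the claimed bound.

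I do not expect a substantial obstacle here; the statement is essentially a packaging of localisation. The two points that need genuine care are (i) checking that the truncated integrand produces a square-integrable — not merely local — martingale, which is exactly what the choice of $\tau_n$ secures and which is what makes Doob's inequality legitimately applicable, and (ii) the minor bookkeeping between the strict inequality used to define $\tau_n$ and the strict inequality appearing on the right-hand side of the statement, which is handled by invoking continuity of the clock $t\mapsto\int_0^t\|\Phi(s)\|_{\Lambda_2(K_Q,H)}^2\,ds$.
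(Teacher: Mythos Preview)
Your argument is correct and is precisely the standard localisation-plus-Doob proof. Note, however, that the paper does not give its own proof of this proposition: it is stated in the Appendix as a quotation of Lemma~2.5 from Gawarecki--Mandrekar, with no argument supplied. Your write-up reproduces exactly the proof one finds in that reference, so there is nothing to compare.
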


\vspace{3mm}
\noindent\textbf{Funding}\\
\noindent Both authors have been supported by the European Research Council (ERC) under the European Union’s Horizon 2020 Research and Innovation Programme (ERC, Grant Agreement No 856408). 

\vspace{3mm}
\noindent\textbf{Data availability statement} \\
\noindent Data sharing not applicable to this article as no datasets were generated or analysed during the current study.

\vspace{3mm}
\noindent\textbf{Conflict of interest statement} \\
On behalf of both authors, the corresponding author states that there is no conflict of interest.

\end{document}